\newtheorem{definition}{Definition}
\newtheorem{proposition}{Proposition}
\newtheorem{example}{Example}
\newtheorem{theorem}{Theorem}
\newtheorem{lemma}{Lemma}
\newtheorem{corollary}{Corollary}
\newtheorem{result}{Result}
\newcommand{\RR}{\mathbb{R}}	
\newcommand{\Lll}{\ensuremath{\Lambda}}	
\newcommand{\lii}{\ensuremath{\Lambda}}	
\newcommand{\liu}{\ensuremath{\Lambda^*}}	
\newcommand{\Llu}{\ensuremath{{\Lambda}^*}}	
\newcommand{\lru}{\ensuremath{{\Lambda}_r^*}}	
\newcommand{\sn}{\ensuremath{s_n^+}}	
\newcommand{\snn}{\ensuremath{S_n^+}}	
\newcommand{\snu}{\ensuremath{{(S_n^+)}^*}}	
\newcommand{\sm}{\ensuremath{s_m^-}}	
\newcommand{\smm}{\ensuremath{S_m^-}}	
\newcommand{\smu}{\ensuremath{{(S_m^-)}^*}}	
\newcommand{\sqq}{\ensuremath{S_q^+}}	
\newcommand{\Dym}{\ensuremath{\Delta_{mi}}}	
\newcommand{\Dyk}{\ensuremath{\Delta_{m\ih}}}	
\newcommand{\Dxn}{\ensuremath{\nabla_{ni}}}	
\newcommand{\Dxk}{\ensuremath{\nabla_{n\ih}}}	
\newcommand{\NM}{\ensuremath{\Pi}}	
\newcommand{\Nn}{\ensuremath{\Pi_{N}}}	
\newcommand{\NMu}{\ensuremath{\Pi^u}}	
\newcommand{\Nnu}{\ensuremath{\Pi_{N}^u}}	
\newcommand{\Ax}{\ensuremath{\alpha_{1}}}	
\newcommand{\Axx}{\ensuremath{\alpha_{2}}}	
\newcommand{\Axn}{\ensuremath{\alpha_{N}}}	
\newcommand{\By}{\ensuremath{\beta_{1}}}	
\newcommand{\Byy}{\ensuremath{\beta_{2}}}	
\newcommand{\Bym}{\ensuremath{\beta_{M}}}	
\newcommand{\map}{\ensuremath{\mathbf{\mathit{m}}}}
\newcommand{\Uj}{\ensuremath{\mathcal{U}_j}}	
\newcommand{\U}{\ensuremath{\mathcal{U}}}	
\newcommand{\Ei}{\ensuremath{\mathcal{E}^{i}(\mathcal{U})}}	
\newcommand{\Eih}{\ensuremath{\mathcal{E}^{\ih}(\mathcal{U})}}
\newcommand{\un}{\ensuremath{\sigma}} 
\newcommand{\minU}{\ensuremath{\upsilon_{\ih}^*}} 
\def\ih{\hat{\imath}}
\newcommand{\Ti}{\ensuremath{\theta^{i}}}		
\newcommand{\Tih}{\ensuremath{\theta^{\ih}}}		
\newcommand{\Tii}{E^{i}}	
\newcommand{\Nx}{\ensuremath{x}}
\newcommand{\NX}{\ensuremath{X}}
\newcommand{\Ny}{\ensuremath{y}}
\newcommand{\NY}{\ensuremath{Y}}
\newcommand{\UX}{\ensuremath{\bar{X}}}
\newcommand{\UY}{\ensuremath{\bar{Y}}}
\title{Uncertain Data Envelopment Analysis: Box Uncertainty}
\author{
  Emma~Stubington\thanks{Corresponding author} \\
 STOR-i Centre for Doctoral Training\\ Lancaster University\\ Lancaster, UK\\
   \texttt{e.stubington@lancaster.ac.uk} \\
   \And
 Matthias~Ehrgott \\
  Department of Management Science\\ Lancaster University\\Lancaster, UK\\
  \texttt{m.ehrgott@lancaster.ac.uk} \\
   \And
Omid~Nohadani \\
  Department of Industrial Engineering\\ Northwestern University\\ Evanston, IL, USA\\\texttt{nohadani@northwestern.edu} \\
}
\begin{document}
\maketitle

\begin{abstract}
Data Envelopment Analysis (DEA) is a nonparametric, data driven technique used to perform relative performance analysis among a group of comparable decision making units (DMUs). Efficiency is assessed by comparing input and output data for each DMU via linear programming. Traditionally in DEA, the data are considered to be exact. However, in many real-world applications, it is likely that the values for the input and output data used in the analysis are imprecise. To account for this, we develop the uncertain DEA problem for the case of box uncertainty. We introduce the notion of DEA distance to determine the minimum amount of uncertainty required for a DMU to be deemed efficient. For small problems, the minimum amount of uncertainty can be found exactly, for larger problems this becomes computationally intensive. Therefore, we propose an iterative method, where the amount of uncertainty is gradually increased. This results in a robust DEA problem that can be solved efficiently. This study of uncertainty is motivated by the inherently uncertain nature of the radiotherapy treatment planning process in oncology. We apply the method to evaluate the quality of a set of prostate cancer radiotherapy treatment plans relative to each other.
\end{abstract}

\keywords{Data Envelopment Analysis \and Uncertain Data \and Robust Optimisation \and Uncertain DEA Problem \and Radiotherapy Treatment Planning}

\section{Introduction and Motivation}\label{Sec:Intro}

Standard Data Envelopment Analysis (DEA) models assume that the input and output data are exact and decision making units (DMUs) are classified as inefficient or efficient based on these data. However, the data of many real-world applications are inherently uncertain. Hence, it is possible that an inefficient DMU performs well in practice, i.e. it is the uncertainty in the data that stops it being classified as efficient. The reliability of the conclusions from a DEA model is intrinsically linked to the quality of the data used and hence methods to account for uncertainty are required. In this paper, we study the uncertain nature of the data and the effect this has on an individual DMU's efficiency score. We follow the robust optimisation framework and assume the input and output data are in fact realisations from a range, called an uncertainty set. An uncertainty set comprises of all the possible values that the uncertain input and output data can take. The nature of the uncertainty set is selected by the user to capture the uncertainty in the problem, \citet{gorissen2015practical}. Robust optimisation has become popular because it provides modelling flexibility while considering the implications of uncertain data. 

Building upon \citet{ehrgott2018uncertain}, we refine the concept of uncertain DEA (uDEA) for the specific case of box uncertainty. Solving a uDEA model determines the maximum efficiency score that a DMU can achieve over all admissible uncertainties and the minimum amount of uncertainty required to achieve this efficiency score. This refinement is motivated by the radiotherapy treatment planning application which we discuss in Section \ref{SubSec:CaseStudy}. 

\subsection{Contributions of this paper}
Box uncertainty is one of the most widely used uncertainty sets and for uncertain linear optimisation problems results in a tractable\footnote{Here we will use the notion of a tractable solution to describe a problem that can be reformulated into an equivalent problem that can be solved in polynomial time.} robust counterpart. This allows us to develop a good approximation to the uDEA problem as introduced in \citet{ehrgott2018uncertain}. In Section \ref{SubSec:DEADistance}, we define the notion of DEA distance. This allows us to investigate the relationship between the amount of uncertainty required for an inefficient DMU to be deemed efficient. In Section \ref{SubSec:ChangeU}, we propose an iterative method to solve the uDEA problem for box uncertainty, where the amount of uncertainty is gradually increased. This results in a robust DEA problem that can be solved efficiently. The developed method is then used to compare radiotherapy treatment plans for prostate cancer. At the same time, this accounts for the numerous sources of uncertainty arising in the planning and delivery of radiotherapy treatment plans.

\section{Uncertain Data Envelopment Analysis}

\subsection{General Data Envelopment Analysis}

Throughout, we assume the (standard) input oriented envelopment BCC (variable returns to scale) DEA model, but the following theorems and concepts can easily be adapted to other DEA formulations. We will refer to this as the nominal DEA problem. Consider $I$ DMUs, each with $M$ outputs and $N$ inputs, $I > M+N$. The efficiency score $E^{i}$ for DMU $i$ is defined by solving the linear program
\begin{equation} \label{Eq:Nominal}
E^{i} = \min \{ \Ti : \NY \lambda - \Ny^{i} \geq 0, \;
        \NX \lambda - \Ti \Nx^{i} \leq 0, \; e^T \lambda = 1, \; \lambda \geq 0 \},
\end{equation}
where $e\in \RR^I$ is the vector of ones. The nominal data matrices $\NY$ and $\NX$ are the non-negative, non-zero $M \times I$ output and $N \times I$ input matrices, such that $\NY_{mi}(\NX_{ni})$ is the $m(n)$\textsuperscript{th} output(input) value for DMU $i$. We denote the $m(n)$\textsuperscript{th} row of $\NY(\NX)$ by $\NY_m(\NX_n)$ and the $i$-th column by $\Ny^{i}$ and $\Nx^{i}$. Throughout, we will use the notation $m \in \mathcal{M},~n\in \mathcal{N}$ and $i \in \mathcal{I}$, where $\mathcal{M}=\{1,\ldots M\},~\mathcal{N}=\{1,\ldots N\}$ and $~\mathcal{I}=\{1,\ldots I\}$, respectively. 
The solution $(\lambda,\Ti)=(0,0, \dots, 1,\allowbreak \dots ,0,1)$, where the first one is in the $i$\textsuperscript{th} position is feasible for \eqref{Eq:Nominal}. The feasibility of this solution ensures the maximum value that $\Ti$ can take is $1$, i.e. an optimal value $E^{i}$ of model \eqref{Eq:Nominal} is less than or equal to one. In this way, we denote an optimal solution for DMU $i$ by $( \Lambda,E^{i})$, $\Lambda \in \RR^I.$ We use the following definition of efficiency.

\begin{definition}\label{Def:EffDMU}
DMU $i$ is efficient if and only if $E^{i}=1$.
\end{definition}
Therefore, if $E^{i}<1$ DMU $i$ is inefficient. To find the efficiency scores for the $I$ DMUs \eqref{Eq:Nominal} must be solved $I$ times, once for each of the $I$ DMUs. 

\begin{definition}\label{Def:PPS}
 The production possibility set, PPS, is defined to be
 \begin{equation}
  T=\{(x,y) \in \RR^N \times \RR^M: 
      \; x \ge \NX \lambda,\; y \le \NY \lambda, \; e^T \lambda = 1, \; \lambda \geq 0\}.
   \end{equation}
\end{definition}

\begin{definition}
 The efficient frontier is the subset of points of set $T$ satisfying the efficiency condition from Definition \ref{Def:EffDMU}.
\end{definition}
In DEA, the efficient frontier represents the standard of performance that all DMUs should try to achieve. DMUs that are inefficient, $\Ti<1$, are projected to the efficient frontier such that the proportional reduction in inputs is maximised while the outputs of DMU $i$ remain fixed.
\begin{definition}\label{Def:ExtremePoint}
 $(\Nx,\Ny)\in T$ is an extreme point of $T$ if and only if there are no points 
 $(x',y'),~(x'',y'') \in T$ and no $\alpha \in (0,1)$ such that
 \begin{equation} (x,y)=\alpha (x',y') +  (1-\alpha) (x'',y'').
 \end{equation} 
\end{definition}

All extreme points of $T$ are efficient DMUs. 
Throughout, we use $i \in \mathcal{I}$ to represent a generic DMU, efficient or inefficient, and $\ih$ to denote the (inefficient) DMU under consideration.

\subsection{Uncertain DEA}

In \eqref{Eq:Nominal}, the data $\NX$ and $\NY$ are assumed to be exact. However, in many real-world applications this is not the case. Therefore, we consider the effect that uncertainty has via uDEA, as introduced in \citet{ehrgott2018uncertain}. To establish a framework for solving the uDEA problem for the specific case of box uncertainty, we introduce some key definitions from \citet{ehrgott2018uncertain}. 
Following the robust optimisation methodology, we assume that the uncertainty can be modelled constraint-wise. In this way, each constraint from \eqref{Eq:Nominal} is replaced with a set of constraints and we define the robust efficiency score for DMU $i$, $\Ei$, to be the optimal value of the robust DEA model,
\begin{equation} \label{Eq:Robust}
\begin{split}
\Ei = \min \{ &\Ti : \UY_j \lambda - \UY_{ji} \geq 0, \forall ~\UY_j \in \mathcal{U}_j,~j=1,\ldots, M,\\
              &\UX_{j-M} \lambda - \Ti\UX_{(j-M)i} \leq 0,~\forall~ \UX_{j-M} \in \mathcal{U}_j,~j=M+1,\ldots, M+N,\\
              & e^T \lambda = 1, \lambda \geq 0 \},
\end{split}
\end{equation}
where $\UY(\UX)$ is the uncertain output(input) data and $\mathcal{U}_j$ is the uncertainty set for output(input) $j$. Throughout, uncertain data are distinguished from the certain nominal data by bar notation. Note that we use the notation $j \in \mathcal{J},$ where $\mathcal{J}=\{1,\ldots, M+N\}$. 
\begin{definition}\label{Def:UncertainInOut}
 The uncertain inputs and outputs are defined as follows:
\begin{enumerate}
 \item $\mathcal{U}_j$
  is an uncertainty set that models the possible values of the output(input) data $\UY_j(\UX_{j-M})$. Hence each $\UY_j \in \mathcal{U}_j,~j=1,\ldots, M(\UX_{j-M} \in \mathcal{U}_j,j=M+1,\ldots, M+N)$ is a possible row vector of output(input) data for the $j(j-M)$th output(input).
 \item $\mathcal{U} = \{\mathcal{U}_j: j \in \mathcal{J}\}$ 
 is a collection of uncertainty. Hence $\mathcal{U}$ contains the totality of uncertainty across all inputs and outputs.
\end{enumerate}
\end{definition}

To ensure consistency with DEA methodology we must place a restriction on the nature of the uncertainty sets which is not required in a more general robust optimisation setting. We assume that the introduction of uncertainty does not introduce negative data. 
\begin{definition}\label{def-harbor}
Consider the two collections of uncertainty,
\begin{equation}
\mathcal{U}' = \{ \mathcal{U}'_j : j \in \mathcal{J} \} \; \mbox{ and } \;
\mathcal{U}'' = \{ \mathcal{U}''_j : j \in \mathcal{J} \}.
\end{equation}
We say that $\mathcal{U}''$ harbours at least the uncertainty of $\mathcal{U}'$, 
denoted by $\mathcal{U}' \unlhd \, \mathcal{U}''$, if $\mathcal{U}'_j \subseteq \mathcal{U}''_j$ for 
$j \in \mathcal{J}$.
\end{definition}

For the nominal data $\NY(\NX)$, we have a fixed data collection $\mathcal{U}^0$ where $\mathcal{U}^0_j \in \mathcal{U}^0,~j \in \mathcal{J},$ is such that $\mathcal{U}^0_j=\{\NY_j\},~ j=\{1, \ldots , M\},~\mathcal{U}^0_j=\{\NX_{j-M}\},~j=\{M+1, \ldots ,N+ M\}$. We only consider $\mathcal{U}$ such that $\mathcal{U}^0 \unlhd \mathcal{U},$ i.e. we only consider uncertainty sets which harbour at least the same amount of uncertainty as the nominal data, which is no uncertainty.

In order to model and analyse the effect of uncertainty, we allow the data to come from a variety of uncertainty sets. Formally, we say
$\Omega$ is the universe of possible collections of uncertainty.
\citet{ehrgott2018uncertain} define a numerical association with $\mathcal{U}$ as in Definition \ref{Def:Amount}.

\begin{definition}\label{Def:Amount}
 An amount of uncertainty is a mapping 
 \begin{equation}
    \map:\Omega \rightarrow \RR_+ : \U \mapsto \map (\U) 
 \end{equation} \vspace{-1em} such that
 \begin{enumerate}
 \item there is zero uncertainty if and only if there is no uncertainty i.e. $\map (\U) = 0$ if and only if $| \Uj| = 1$ for $j \in \mathcal{J}$,
 and
\item $\map (\U)$ is monotonic, i.e. $\map (\U') \leq \map(\U'')$ if $\mathcal{U}' \unlhd \, \mathcal{U}''$.
\end{enumerate}
\end{definition}

The above definitions allow for a large amount of modelling flexibility as $\map(\U)$ and $\Omega$ can be modelled differently for each DMU and each output/input and there is no restriction on the uncertainty set. We let $\left( \Omega, \map \right)$ be a configuration of uncertainty, where $\Omega$ is a universe of possible collections of uncertainty satisfying $\mathcal{U}^0 \unlhd \mathcal{U}~ \forall~\mathcal{U}\in \Omega$. As in \citet{ehrgott2018uncertain}, we suggest an assessment of the amount of uncertainty such as
\begin{equation}
   \map(\U)=\| \U \|_{p,q} := \Big\| \langle \|R_1\|_p, \dots, \|R_{M+N}\|_p \rangle \Big\|_q,
\end{equation}
which aligns with the standard $L_{p,q}$ notation associated with matrix norms. Note that in \citet{ehrgott2018uncertain}, the matrices $R_j$ as well as the values of $p$ and $q$ describe the shape and size of the uncertainty sets $\U_j$. Here, we consider the special case of box uncertainty and therefore, we use the following definition.

\begin{definition} \label{Def:BoxUncertainty}
 Box uncertainty is the configuration of uncertainty $\left( \Omega, \map(\U) \right)$ with
 \begin{equation}
  \U_j (\sigma_j) = \begin{cases} \{\NY_j + \sigma_j u^T I^u:||u||_\infty \leq 1\} & j=1,\ldots, M \\
                       \{\NX_{j-M} + \sigma_{j-M} u^T I^u:||u||_\infty \leq 1\} & j=M+1,\ldots, M+N,  \end{cases}
 \end{equation}
 where $I^u$ is the $I\times I$ identity matrix. The universe of possible uncertain collections is
 \begin{equation}
 \begin{aligned}\label{Eq:Omega}
    \Omega 
           = &\{\U_j(\sigma_j):0\leq \sigma_j\leq \nu_j,~j \in \mathcal{J}\},
    \end{aligned}
 \end{equation} 
 and $\nu_j$ is the maximum amount of uncertainty output $j$(input $j-M$) can take and $\map(\U)=||\U||_{\infty,\infty}= \max \{|\sigma_j|:j \in \mathcal{J}\}$, i.e. $R_j = \sigma_j I^u$.
\end{definition}
To ensure consistency with DEA methodology, we assume that the uncertain data is bounded below by zero, i.e. the introduction of uncertainty does not introduce negative data into the model. In Definition \ref{Def:BoxUncertainty}, $\Omega$ is restricted by $\nu$, the maximum amount of acceptable uncertainty. If $\Omega$ is unrestricted, then $\nu=\infty.$ 
In most real world applications, we assume that such an upper bound exists (otherwise we would essentially state that the value of a particular output(input) is completely unknown  and can take any value in $\RR^+_0$). For example, in Section \ref{SubSec:CaseStudy}, an upper bound on the amount of uncertainty is used such that the dose of radiation delivered to the tumour and organs at risk is within clinically acceptable guidelines. 

In Definition \ref{Def:BoxUncertainty}, each data point can vary within $\pm \sigma_j$ from its nominal value. W.l.o.g. we assume that the possible deviation from the nominal data value is the same for each output/input. This is because BCC DEA models are not affected by scaling, \citep{charnes1984preface}. 
Therefore, the outputs and inputs can be scaled so that the amount of uncertainty is the same for each output and input. Throughout, we assume this scaling of the outputs and inputs has already occurred. We use $\NY_{1i}, \dots , \NY_{Mi},\NX_{1i}, \dots ,\NX_{Ni},~i\in \mathcal{I}$ to denote the scaled data and drop the subscript $j$ on $\sigma$.
In this way, $\map(\U)=\sigma$.

For the specific case of box uncertainty \eqref{Eq:Robust} becomes 
\begin{equation} \label{Eq:RobustBoxDEA}
    \begin{aligned}
         \Ei =& \min \{\Ti : (\NY+\Delta) \lambda - (\Ny^{i}+\delta_y^{i}) \geq 0,\\&(\NX+\nabla) \lambda - \Ti (\Nx^{i}+\delta_x^{i}) \leq 0, e^T \lambda = 1,
        \lambda \geq 0\},
    \end{aligned}
\end{equation}
where $\Delta_{mi},\nabla_{ni} \in [-\un,\un]$ for all $i, m, n$. The matrix $\Delta(\nabla)$ is the uncertainty associated with the output(input) matrix such that $\Delta_{mi}(\nabla_{ni})$ is the uncertainty associated with the $m(n)$\textsuperscript{th} output(input) value for DMU $i$. We denote the $m(n)$\textsuperscript{th} row of $\Delta(\nabla)$ by $\Delta_m(\nabla_n)$ and the $i$-th column by $\delta_y^{i}(\delta_x^{i})$. 
An optimal solution to \eqref{Eq:RobustBoxDEA} is denoted $\left( \hat{\Lambda}, \mathcal{E}^{\ih}(\mathcal{U})\right)$. 

\citet{ehrgott2018uncertain} define the uDEA problem for the $i$\textsuperscript{th} DMU to be,
\begin{equation} \label{Eq:uDEA}
\gamma^*_i 
 =  \sup_{0 \leq \gamma_i  \leq 1} \{ \gamma_i  :
	~ \min_{\U \in \Omega} ~ \{ \map(\U) : 
		\Ei \geq \gamma_i  \} \}.
\end{equation}
Here, $(\Lambda^*,\gamma^*_i,\Delta^*,\nabla^*)$, denotes the values the variables take at an optimal solution to the uDEA problem \eqref{Eq:uDEA}. In this way, \eqref{Eq:Robust} is the special case of \eqref{Eq:uDEA} when the amount of uncertainty is fixed. We now wish to distinguish between those DMUs that remain inefficient in the presence of uncertainty and those that can achieve efficiency. To do this, we use the definition from \citet{ehrgott2018uncertain} of capable and incapable DMUs.
\begin{definition}\label{Def-capable} A DMU $i$ under $\Omega$ is:
\begin{enumerate}
\item capable if $\gamma^*_{i} =\Ei=1 $ 
	for some $\U \in \Omega$.
\item weakly incapable if $\gamma^*_{i} = 1$ 
	but $\Ei< 1$ for all $\U \in \Omega$.
\item strongly incapable if $\gamma^*_{i} < 1$.
\item incapable if it is either
	strongly or weakly incapable.
\end{enumerate}
\end{definition}
In this way, a DMU is incapable if it is inefficient for all uncertain data instances, i.e. even with the ability to select the most favourable data from $\Omega$, it has no claim on efficiency. Without changing the operation of the DMU, the only way for an incapable DMU to become capable is if a change in $\Omega$ occurs.

\subsection{Initial Results}

We now introduce some initial results to be used in the following sections. 
To refer to points of the PPS that are defined by uncertain data of existing DMUs, we use the notion from \citet{thanassoulis1998simulating} of unobserved DMUs. Here we call them virtual DMUs and define them in the following way. 

\begin{definition}\label{Def:VirtualDMU}
 The virtual DMU $i^u,~i \in \mathcal{I}$, is defined to be the DMU with inputs and outputs $\left(x^{i}+\un,y^{i}-\un\right),~\forall~ i\neq\ih \in \mathcal{I}$ and $\left(x^{\ih}-\un,y^{\ih}+\un\right)$ for $\ih$. 
\end{definition}

This allows us to consider projections from the point $(x^{i^u},y^{i^u})$ to the efficient frontier. We assume that the PPS has dimension $M+N$ and hence the facet of the efficient frontier DMU $\ih$ is projected to can be defined by at most $\Phi=N+M$ DMUs. A hyperplane in $\Phi$ dimensions can be defined by
\begin{equation}\label{Eq:PlaneN+M}
  \NM = \{r \in \RR^{\Phi}: r^T \eta_1 = d\},
 \end{equation}
where $\eta_1=(\Ax, \dots, \Axn , \By, \dots ,\Bym)^T$ is a normal vector to the plane and $d$ is a constant. We let $\Psi$ denote the set of hyperplanes that define facets of the efficient frontier and let $\NMu$ be the hyperplane formed by the parallel translation $\NM \to \NMu$, where
$(\Nx,\Ny) \in \NM \to (\Nx+\un,\Ny-\un) \in \NMu$. Then $\Psi^u$ is the set of hyperplanes that define facets of the transformed efficient frontier when uncertainty is introduced (this follows from Theorem \ref{Thm:MaxIncUFix} below). When solving \eqref{Eq:Nominal} for inefficient DMU $\ih$, it is projected to a point on the efficient frontier along a trajectory of fixed outputs. This is the unique point the achievement of which will render the DMU efficient on the nominal efficient frontier. However, the introduction of uncertainty in \eqref{Eq:uDEA} means there may be alternate hyperplanes $\NMu \in \Psi^u$ to which DMU $\ih^u$ is projected, establishing an increase in its efficiency score. We wish to determine to which facet of the efficient frontier DMU $\ih^u$ should be projected, such that the least amount of uncertainty is required for DMU $\ih^u$ to be deemed efficient. To describe these alternative projections, we define target points in Definition \ref{Def:Target}.
\begin{definition}\label{Def:Target}
The target point $T(i^u,\NMu)$ is defined to be the point on the plane $\NMu$ to which DMU $i^u$ is projected.
\end{definition} 
Here, we note that our use of projection in Definition \ref{Def:Target} is not a standard DEA term. Definition \ref{Def:Target} allows us to consider the projection of DMU $\ih^u$ to all $\NMu \in \Psi^u$ as opposed to the single projection in the nominal DEA problem \eqref{Eq:Nominal}.

Let $\sm\geq 0$ ($s^{+}_n\geq 0$) be the slack in the $m(n)$\textsuperscript{th} output(input) constraint in \eqref{Eq:Nominal} or \eqref{Eq:RobustBoxDEA}. We denote the values that $\sm,~\sn$ take at an optimal solution to \eqref{Eq:Nominal} by 
$\smm,~\snn$ and at an optimal solution to \eqref{Eq:RobustBoxDEA} by $\smu,~\snu$. In this way, any binding constraints in \eqref{Eq:Nominal} or \eqref{Eq:RobustBoxDEA} will have slack variables equal to zero.
\begin{proposition} \label{Prop:BindingInput}
For inefficient DMU $\ih$ and for any optimal solution $(\Lambda, E^{\ih})$ to \eqref{Eq:Nominal} there exists at least one binding input constraint.
\end{proposition}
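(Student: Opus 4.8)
The plan is to argue by contradiction, exploiting the structural fact that the radial variable $\Ti$ enters the nominal program \eqref{Eq:Nominal} only through the objective and the input constraints, and never through the output constraints or the normalisation $e^T\lambda=1$. If no input constraint were active at an optimal solution, then nothing would pin $\Ti$ from below, and I could lower it slightly while keeping $\lambda$ fixed to produce a strictly better feasible point, contradicting optimality.

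Concretely, suppose for contradiction that at the optimal solution $(\Lambda,E^{\ih})$ every input constraint is slack, i.e. $\snn>0$ for all $n\in\mathcal{N}$, which is equivalent to
\begin{equation}
\NX_n\Lambda - E^{\ih}\NX_{n\ih} < 0 \qquad \text{for all } n\in\mathcal{N}.
\end{equation}
I would then perturb only the radial variable, replacing the objective value $E^{\ih}$ by $E^{\ih}-\epsilon$ for small $\epsilon>0$ while holding $\lambda=\Lambda$ fixed, and verify feasibility class by class. The output constraints $\NY\Lambda-\Ny^{\ih}\geq 0$ and the conditions $e^T\Lambda=1,\ \Lambda\geq 0$ are unchanged because they do not involve $\Ti$. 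For the input constraints, substitution gives
\begin{equation}
\NX_n\Lambda - (E^{\ih}-\epsilon)\NX_{n\ih} = \bigl(\NX_n\Lambda - E^{\ih}\NX_{n\ih}\bigr) + \epsilon\,\NX_{n\ih},
\end{equation}
where the first bracket is strictly negative by assumption and the added term is nonnegative since the input data are nonnegative. Hence each input constraint remains satisfied for $\epsilon$ small enough, and $(\Lambda,E^{\ih}-\epsilon)$ is feasible with objective value strictly below $E^{\ih}$, the desired contradiction.

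The main delicate point is obtaining a single positive $\epsilon$ that preserves all input constraints simultaneously, which rests squarely on the nonnegativity of $\NX_{n\ih}$. For each $n$ with $\NX_{n\ih}>0$ the admissible bound $\tfrac{E^{\ih}\NX_{n\ih}-\NX_n\Lambda}{\NX_{n\ih}}$ is strictly positive, so taking $\epsilon$ below the minimum of these finitely many bounds suffices. The edge case $\NX_{n\ih}=0$ is harmless and in fact reinforces the claim: the perturbation term vanishes so the constraint is unaffected, and moreover such a constraint reads $\NX_n\Lambda\le 0$, which together with $\NX_n\Lambda\ge 0$ forces $\NX_n\Lambda=0$ and is therefore already binding. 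I would note in passing that the inefficiency of $\ih$ is not actually required by this mechanism, but it is the regime relevant to the projection results that follow, so the statement is phrased for inefficient DMUs.
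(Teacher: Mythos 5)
Your proof is correct and takes essentially the same approach as the paper's: both argue by contradiction that if every input constraint were strictly slack, the radial variable $\Tih$ could be strictly decreased with $\Lambda$ held fixed --- by any amount below the minimum normalised slack $\min_{n}\,(E^{\ih}\NX_{n\ih}-\NX_{n}\Lambda)/\NX_{n\ih}$, which is exactly the quantity the paper's slack-reduction argument uses --- contradicting optimality. If anything, your version is the more carefully stated one, since you treat the edge case $\NX_{n\ih}=0$ explicitly, whereas the paper's proof assumes $\NX_{ni}>0$ for all $n,i$.
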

\begin{proof} Because \eqref{Eq:Nominal} is a linear program, at an optimal solution there is always at least one binding constraint. Assume that this is an output constraint $p$ and that there are no binding input constraints. Then at an optimal solution, each input constraint, $n \in \mathcal{N}$, can be written as: 
\begin{equation}
\frac{\NX_n\Lll }{\NX_{n\ih}}+\frac{\snn}{\NX_{n\ih}}=E^{\ih}.
\end{equation}
Let input $q$ have the largest value of $\frac{\NX_n \Lll }{\NX_{n\ih}}$. Given that $\NX_{ni}>0~ \forall~ n,i$,
\begin{equation}
\label{Prop:BindingInputConst}
  \frac{ \NX_{q}\Lll}{\NX_{q\ih}} \geq  \frac{\NX_{n}\Lll}{\NX_{n\ih}}~\Leftrightarrow~\frac{\sqq}{\NX_{q\ih}}\leq \frac{\snn}{\NX_{n\ih}}~\forall~n ~\neq q. 
\end{equation}
Reducing $\sqq$ to 0 reduces $E^{\ih}$ by $\frac{\sqq}{X_{q\ih}}$. Then all remaining $\frac{\snn}{\NX_{n\ih}}$ must reduce by $\frac{\sqq}{\NX_{q\ih}}$. Because \eqref{Eq:Nominal} is a minimisation problem, $\sqq=0$ is feasible, which contradicts the assumption that there are no binding input constraints. 
\end{proof}
Similarly, Proposition \ref{Prop:BindingInput} holds for any given uncertainty in \eqref{Eq:Robust}. We show that for box uncertainty, Definition \ref{Def:BoxUncertainty}, there always exists an optimal solution to \eqref{Eq:RobustBoxDEA}. 

\begin{theorem}\label{Thm:ExtremePoints}
For box uncertainty, Definition \ref{Def:BoxUncertainty}, there exists an optimal solution, $\left( \hat{\Lambda}, \mathcal{E}^{\ih}(\mathcal{U})\right),$ to \eqref{Eq:RobustBoxDEA} such that $\Dym^* \in \{-\un,\un\},~\Dxn^* \in \{-\un,\un\},~m\in \mathcal{M},~ n\in \mathcal{N},~i \in \mathcal{I}.$
\end{theorem}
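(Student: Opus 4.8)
The plan is to first secure existence of an optimiser and then refine it to a corner of the box. Existence is immediate: putting all weight on DMU $\ih$, i.e.\ $\lambda_{\ih}=1$ and $\lambda_k=0$ for $k\neq\ih$, together with $\Tih=1$, is feasible for \eqref{Eq:RobustBoxDEA} under every admissible realisation, since both the output and the input constraints then reduce to $0\ge 0$ and $0\le 0$. Combined with $e^T\lambda=1,\ \lambda\ge 0$ this confines the feasible pair to the compact set (simplex $\times\,[0,1]$), so a minimiser is attained by continuity. The substance of the theorem is that the uncertainty in such a minimiser may be taken at the corners, and the plan here is to exploit that, for fixed $\lambda$ and $\Tih$, each constraint of \eqref{Eq:RobustBoxDEA} is affine in $\Delta$ and $\nabla$, while the admissible uncertainty forms the box $[-\un,\un]$ in every coordinate---a polytope whose extreme points have every entry equal to $-\un$ or $\un$---and a linear function attains its optimum over a polytope at an extreme point.

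Concretely, I would fix $\lambda$ and $\Tih$ at the values of a minimiser and note that the system decouples: the $m$-th output constraint involves only the row $\Delta_m$ and the $n$-th input constraint only the row $\nabla_n$, so each row may be handled in isolation. For the output row, $(\NY_m+\Delta_m)\lambda-(\NY_{mi}+\Delta_{mi})$ is affine in $\Delta_m$, and robust non-negativity is equivalent to non-negativity at its minimum over the box. That minimum is reached by sending each entry to the endpoint opposite the sign of its coefficient; since $e^T\lambda=1,\ \lambda\ge 0$ force $0\le\lambda_i\le 1$, the coefficient $\lambda_k$ of $\Delta_{mk}$ is non-negative for $k\neq i$ while the coefficient $\lambda_i-1$ of $\Delta_{mi}$ is non-positive, placing the worst case at $\Delta_{mk}=-\un$ and $\Delta_{mi}=\un$. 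The same argument applied to $(\NX_n+\nabla_n)\lambda-\Tih(\NX_{ni}+\nabla_{ni})$, whose robust version requires non-positivity at its maximum, sends $\nabla_{nk}=\un$ for $k\neq i$ and the own entry $\nabla_{ni}$ to $-\un$ or $\un$ according to the sign of $\lambda_i-\Tih$. Since robust feasibility of $\hat\Lambda$ is equivalent to feasibility at these worst-case rows, declaring $\Delta^*$ and $\nabla^*$ to equal them exhibits an optimal solution of the required form without altering $\hat\Lambda$ or $\Eih$.

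The one point requiring care is the entries whose coefficient vanishes---any $k$ with $\lambda_k=0$, or the entry of DMU $\ih$ itself when $\lambda_i-1$ (respectively $\lambda_i-\Tih$) is zero---because for these the constraint value does not depend on the corresponding $\Delta_{mi}$ or $\nabla_{ni}$. For such entries I would simply assign either endpoint, say $-\un$, which neither disturbs a binding constraint nor changes the objective; this is precisely why the statement asserts the existence of some optimal solution with the vertex property rather than claiming it of all of them. Gathering these choices over all $m,n,i$ produces an optimal solution with $\Delta^*_{mi},\nabla^*_{ni}\in\{-\un,\un\}$. I expect this degeneracy bookkeeping to be the only delicate part; the underlying fact that a linear objective over a box is extremised at a corner makes the rest routine.
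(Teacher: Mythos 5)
Your proof is correct, and it reaches the conclusion by a genuinely different route from the paper's. The paper's own proof is geometric and very terse: it inherits feasibility of $(\lambda,\Ti)=(0,\dots,1,\dots,0,1)$ from \eqref{Eq:Nominal}, asserts without proof that the extreme points of the production possibility set under uncertainty are either extreme points of the nominal PPS or extreme points of the uncertainty boxes, and then invokes vertex-optimality of a linear objective over a polyhedron, leaving implicit the step from ``optimum at an extreme point of the feasible region'' to ``every entry of $\Delta^*,\nabla^*$ equals $\pm\un$.'' You instead fix an optimal $(\hat{\Lambda},\Eih)$ and run the standard constraint-wise robust-counterpart reduction: each constraint is affine in its own row of uncertainty, its worst case over the box sits at a vertex whose signs are read off from the coefficients $\lambda_k$ ($k\neq i$), $\lambda_i-1$ and $\lambda_i-\Tih$, and robust feasibility is equivalent to feasibility at those vertex realisations, which you then declare to be $\Delta^*,\nabla^*$. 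This buys three things the paper's argument does not. First, it is rigorous entry by entry, with the degenerate zero-coefficient entries handled explicitly. Second, it sidesteps a real delicacy in the paper's appeal to linear programming: with $(\lambda,\Ti,\Delta,\nabla)$ all varying jointly, the constraints contain bilinear terms $\Delta\lambda$ and $\Ti\nabla_{ni}$, so the joint feasible set is not a polyhedron; your two-stage argument (optimise first, then move only the uncertainty) avoids this entirely. Third, the sign pattern you extract --- other DMUs' outputs at $-\un$ and inputs at $+\un$, the evaluated DMU's output at $+\un$ and input at $-\un$ when $\lambda_{\ih}<\Tih$ --- is precisely the content of Theorem \ref{Thm:MaxIncUFix}, which the paper only establishes later through separate lemmas. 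Two small imprecisions in your existence step, neither fatal: feasibility alone does not confine $\Tih$ to $[0,1]$ (points with $\Tih>1$ are feasible; rather, since $\Tih=1$ is robustly feasible you may restrict the minimisation to $\Tih\le 1$), and the lower bound $\Tih\ge 0$ relies on the paper's standing assumptions that the data are positive and that uncertainty never makes them negative.
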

\begin{proof}
Since the nominal problem \eqref{Eq:Nominal}, is the  special case of \eqref{Eq:RobustBoxDEA} when we have no uncertainty, we inherit the feasibility of $(\lambda,\Ti)=(0,0, \dots, 1,\allowbreak \dots ,0,1)$ in \eqref{Eq:RobustBoxDEA}. 
When uncertainty is introduced the nominal data $\NX,\NY$ can change and as a result the PPS may change. The extreme points of the `new' PPS will be extreme points from the nominal data PPS or extreme points of the uncertainty sets.
An optimal solution of a linear function over a convex polyhedron will exist at an extreme point of the feasible region. 
\end{proof}
Furthermore, for box uncertainty, when $\Omega$ is unrestricted we show that all DMUs are capable.
\begin{theorem} \label{Thm:NeverIncapable}
For box uncertainty, Definition \ref{Def:BoxUncertainty}, when $\nu=\infty$, there can never be an incapable DMU.
\end{theorem}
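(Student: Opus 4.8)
The plan is to prove the statement directly: for every DMU $\ih$ I would exhibit a single finite amount of uncertainty $\sigma$ at which the robust score equals one, so that $\ih$ is capable in the sense of Definition \ref{Def-capable}. The first move is to unfold $\gamma^*_{\ih}$ in \eqref{Eq:uDEA}. The inner minimisation is feasible precisely when some $\U \in \Omega$ attains $\Eih \ge \gamma_{\ih}$, so $\gamma^*_{\ih} = \min\{1,\ \sup_{\U \in \Omega}\Eih\}$. Hence it suffices to produce one $\U \in \Omega$ with $\Eih = 1$: for that $\U$ the value $\gamma^*_{\ih}$ equals $\Eih$ and equals one, which is exactly capability. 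Efficient DMUs already satisfy this at $\sigma=0$, so I only need to treat an inefficient $\ih$, and the whole weight of the argument is to drive the worst-case robust score of \eqref{Eq:RobustBoxDEA} up to one by enlarging the box.

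Since $\Eih \le 1$ always — the solution placing all weight on $\ih$, namely $(\lambda,\theta)=(0,\dots,1,\dots,0,1)$ with the $1$ in position $\ih$, is feasible for \eqref{Eq:RobustBoxDEA} at every $\sigma$ — the task reduces to showing $\Eih \ge 1$ for large $\sigma$, i.e. that no robustly feasible solution can have $\theta<1$. I would first analyse the robust (worst-case) form of the output constraints. For output $m$ the binding realisation inflates $\NY_{m\ih}$ and deflates the competitors' entries, so the constraint reduces (before truncation) to $\NY_m\lambda - \NY_{m\ih} \ge 2\sigma(1-\lambda_{\ih})$. The left-hand side is bounded above by a fixed multiple of $1-\lambda_{\ih}$ determined by the nominal outputs, whereas the right-hand side grows linearly in $\sigma$; therefore, once $\sigma$ exceeds a finite threshold depending only on the nominal output data, every robustly feasible $\lambda$ must satisfy $\lambda_{\ih}=1$. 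Intuitively, the uncertainty inflates $\ih$'s own output requirement so severely that no convex combination of the deflated competitors can meet it.

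With $\lambda_{\ih}=1$ forced, I would close on the input constraints. Substituting this $\lambda$, each robust input constraint collapses to $(1-\theta)\,\xi \le 0$ required for every admissible realisation $\xi$ of $\ih$'s $n$-th input; since $\xi$ ranges over values up to $\NX_{n\ih}+\sigma>0$, the case $\theta<1$ is impossible and $\theta \ge 1$. Combined with $\Eih \le 1$ this gives $\Eih = 1$. Because $\nu=\infty$, any finite $\sigma$ is admissible, so the required $\U$ genuinely lies in $\Omega$; hence $\gamma^*_{\ih}=\Eih=1$ and $\ih$ is capable. As $\ih$ was arbitrary, no incapable DMU can exist.

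The step I expect to be the main obstacle is reconciling this with the standing non-negativity assumption on the data, which naively forbids the large boxes the proof wants to use. The point I would emphasise is that truncating the uncertainty boxes at zero only strengthens the output-forcing step, since it pushes the deflated competitor outputs to zero even faster, and leaves the input-forcing step untouched, because the realisation that rules out $\theta<1$ uses the largest admissible value $\NX_{n\ih}+\sigma$ rather than any clipped small value; in particular the radial problem never degenerates even when some nominal values are annihilated by the uncertainty. A secondary point requiring care is to confirm that the threshold on $\sigma$ is finite and data-dependent, so that a legitimate $\U\in\Omega$ exists, and that the worst-case reduction of the constraints in \eqref{Eq:Robust} is exactly the box-vertex reduction underlying Theorem \ref{Thm:ExtremePoints}.
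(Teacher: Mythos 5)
Your proof is correct, but it forces efficiency through a different family of constraints than the paper does. The paper's proof works on the input side: for an inefficient DMU $\ih$ it picks a single input $n'$ and the finite amount of uncertainty $\un = \NX_{n'\ih}-\epsilon$, so that the corresponding realisation drives $\ih$'s input down to $\epsilon < \min_{i\neq\ih}\UX_{n'i}$; since no convex combination of rivals can match an input that small, the only feasible choice is $\hat\Lambda_{\ih}=1$, whence $\Eih=1$. You instead use the output side: the worst-case realisation of output constraint $m$ gives $\NY_m\lambda-\NY_{m\ih}\ge 2\un(1-\lambda_{\ih})$, whose left-hand side is at most $\left(\max_{i\neq\ih}\NY_{mi}\right)(1-\lambda_{\ih})$, so any $\un>\tfrac{1}{2}\max_{i\neq\ih}\NY_{mi}$ forces $\lambda_{\ih}=1$, after which the input constraints force $\theta^{\ih}\ge 1$. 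Both arguments are of the same constructive type --- exhibit a finite $\un$ at which DMU $\ih$ is necessarily its own sole peer --- but the forcing mechanism and the resulting threshold differ: the paper's threshold depends on $\ih$'s own input value, yours on the rivals' output values. What your version buys is rigour that the paper's three-line proof glosses over: you establish the upper bound $\Eih\le 1$ from the robust feasibility of the solution $(\lambda_{\ih}=1,\,\theta^{\ih}=1)$ under every realisation, you actually prove that no alternative $\lambda$ can attain $\theta^{\ih}<1$ (the paper simply asserts that its candidate solution is optimal), and you reconcile the argument with the non-negativity truncation of the boxes and with the for-all semantics of \eqref{Eq:Robust} --- a point the paper sidesteps by fixing all other deviations to zero, which is only harmless because the worst-case deviations elsewhere (rivals' inputs up, rivals' outputs down) push in the direction that helps the conclusion, exactly as your truncation remark observes.
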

\begin{proof}
If $E^i=1$, DMU $i$ is efficient and hence, capable. For any inefficient DMU $\ih$, $E^{\ih}<1$. Choose an input $n'\in \mathcal{N}$ and let $\nabla_{n'\ih}=X_{n'\ih}-\epsilon,~\nabla_{ni}=0,~\Delta_{mi}=0 ~\forall~ i \in \mathcal{I},~m \in \mathcal{M}$ and $n \in \mathcal{N}$. Here, we can choose any input, $n'\in \mathcal{N}$, as all other data remain fixed to their nominal values. Then $\map(\U)=X_{n'\ih}-\epsilon$ and $\bar{X}_{n'\ih}=X_{n'\ih}-\nabla_{n'\ih}=\epsilon< min_{i\neq\hat{i}}  \bar{X}_{n’ i}$ is a possible realisation of the uncertain data and 
Then $(\hat{\Lambda}_i=0, ~\forall~i \in \mathcal{I},~ i\neq \ih,\hat{\Lambda}_{\ih}=1,~\Eih=1)$ is an optimal solution to \eqref{Eq:RobustBoxDEA} and DMU $\ih$ is capable. 
\end{proof}
In practice, however, it is more likely that $\map(\U)$ will be restricted and there will be an upper bound on the amount of uncertainty. This value should be problem specific. For example, in Section \ref{SubSec:CaseStudy}, the upper bound on the amount of uncertainty is selected to reflect international guidelines on the delivered dose of radiation.

\citet{ehrgott2018uncertain} prove that as the amount of uncertainty increases the efficiency score will increase. Here, we require more specific results for the coming sections. In Lemmas \ref{Lemma:SumIncrease} and \ref{Lemma:SumDecrease},we show that an increase(decrease) in the weighted sum of the outputs can cause an increase(decrease) in the weighted sum of the inputs and hence affect $\gamma^*_{\ih}$ despite $\theta^{\ih}$ not appearing directly in the output constraints in \eqref{Eq:Nominal}.
We define $Q$ to be the set of binding input constraints in \eqref{Eq:Nominal}. First, we consider an increase in the weighted sum of the uncertain outputs $\UY$ compared to the nominal outputs $\NY$.
\begin{lemma}\label{Lemma:SumIncrease}
If $\NY_m \Lll < \UY_m \Llu$, then $\NX_q\lii \leq \UX_q \liu,~q\in Q$ and hence $\gamma^*_{\ih} \geq \Tih$. 
\end{lemma}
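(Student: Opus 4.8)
The plan is to prove the two assertions in sequence: first the structural inequality $\NX_q\lii \leq \UX_q\liu$ on the binding inputs $q \in Q$, and then to deduce $\gamma^*_{\ih} \geq \Tih$ from it. Since the second implication is the routine one, I would settle it first to see precisely what the first must deliver. By Proposition \ref{Prop:BindingInput} we have $Q \neq \emptyset$, and for any binding input $q \in Q$ the nominal optimum satisfies $\NX_q\lii = \Tih\, \NX_{q\ih}$, while robust feasibility of $\liu$ in \eqref{Eq:RobustBoxDEA} gives $\UX_q\liu \leq \gamma^*_{\ih}\,\UX_{q\ih}$. As the virtual DMU $\ih$ receives decreased inputs, $\UX_{q\ih} = \NX_{q\ih} - \un \leq \NX_{q\ih}$ (Definition \ref{Def:VirtualDMU}), so chaining with the target inequality yields $\Tih\, \NX_{q\ih} = \NX_q\lii \leq \UX_q\liu \leq \gamma^*_{\ih}\,\UX_{q\ih} \leq \gamma^*_{\ih}\,\NX_{q\ih}$; dividing by $\NX_{q\ih} > 0$ gives $\gamma^*_{\ih} \geq \Tih$. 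Here I would also use that every $q \in Q$ shares the common ratio $\NX_q\lii / \NX_{q\ih} = \Tih$, which is what lets a per-constraint comparison feed cleanly into the scalar conclusion.

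The heart of the argument is therefore the first inequality, and here I would exploit the monotonicity of the efficient frontier encoded in the sign structure of its supporting hyperplanes. The nominal projection point $(\NX\lii,\NY\lii)$ lies on a facet \NM\ with normal $\eta_1 = (\Ax,\dots,\Axn,\By,\dots,\Bym)$; because the PPS is closed under increasing inputs and decreasing outputs, the input components satisfy $\alpha_n \geq 0$ and the output components $\beta_m \leq 0$, so \NM\ is a valid supporting inequality: every point of the PPS lies on the side $\sum_n \alpha_n x_n + \sum_m \beta_m y_m \geq d$, with equality on the facet. Applying this at the nominal point $(\NX\liu,\NY\liu)$ and subtracting the facet equation at $(\NX\lii,\NY\lii)$ gives $\sum_n \alpha_n(\NX_n\liu - \NX_n\lii) \geq \sum_m (-\beta_m)(\NY_m\liu - \NY_m\lii)$. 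The hypothesis $\NY_m\lii < \UY_m\liu$, rewritten through the component shifts $\UY_{mi} = \NY_{mi} - \un$ for $i \neq \ih$, forces $\NY_m\liu > \NY_m\lii$, so the right-hand side is strictly positive and the $\alpha$-weighted input sum must increase. Translating back by the parallel-translation $\NM \to \NMu$, under which a nominal facet keeps the same normal $\eta_1$, and by the shift $\UX_{qi} = \NX_{qi} + \un$ (so $\UX_q\liu = \NX_q\liu + \un$ when $\ih$ projects onto the remaining DMUs), this monotonicity transfers to the binding inputs $q \in Q$ and yields $\NX_q\lii \leq \UX_q\liu$.

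The main obstacle I anticipate is the passage from the aggregate, hyperplane-level increase to the per-coordinate statement $\NX_q\lii \leq \UX_q\liu$ for each $q \in Q$, compounded by the fact that the two points live on distinct frontiers — the nominal one carrying $(\NX\lii,\NY\lii)$ and the transformed one carrying $(\UX\liu,\UY\liu)$ — with $\ih$ possibly projecting to an alternative facet $\NMu \in \Psi^u$ rather than the exact translate of \NM. The clean resolution is to hold the output levels fixed and argue that, for a monotone frontier, a point achieving strictly larger output cannot be supported at strictly smaller input in the binding directions without contradicting the efficiency (non-domination) of $(\NX\lii,\NY\lii)$; the sign conditions $\alpha_n \geq 0,\ \beta_m \leq 0$ are exactly what make this rigorous, and the parallel-translation structure $\NM \to \NMu$ is what permits reusing the nominal normal on the transformed frontier.

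If the per-coordinate form proves too strong in full generality, I would retreat to the $\alpha$-weighted inequality over $Q$ established above, which already yields $\sum_{q\in Q}\alpha_q\,\NX_q\lii \leq \sum_{q\in Q}\alpha_q\,\UX_q\liu$; combined with the uniform ratio $\NX_q\lii/\NX_{q\ih} = \Tih$ on $Q$ and robust feasibility $\UX_q\liu \leq \gamma^*_{\ih}\UX_{q\ih} \leq \gamma^*_{\ih}\NX_{q\ih}$, this is still enough to drive the scalar conclusion $\gamma^*_{\ih} \geq \Tih$. In either route the decisive ingredient is the nonnegativity/nonpositivity of the facet multipliers, i.e. the geometric fact that along the efficient frontier more output cannot be obtained without more input, which is precisely the mechanism by which an output increase propagates to $\Tih$ even though $\Tih$ never appears in the output constraints of \eqref{Eq:RobustBoxDEA}.
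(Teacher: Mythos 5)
Your chaining argument for the second implication is clean and correct: $\NX_q\lii=\Tih\NX_{q\ih}$ from bindingness (Proposition \ref{Prop:BindingInput}), $\UX_q\liu\leq\gamma^*_{\ih}\UX_{q\ih}$ from robust feasibility, and $\UX_{q\ih}=\NX_{q\ih}-\un\leq\NX_{q\ih}$ together give $\gamma^*_{\ih}\geq\Tih$ \emph{provided} the first assertion holds. But the first assertion, $\NX_q\lii\leq\UX_q\liu$ for each $q\in Q$, is exactly what you never prove, and this is a genuine gap, not a presentational one. Your hyperplane computation delivers only the aggregate inequality $\sum_n\alpha_n(\NX_n\liu-\NX_n\lii)\geq 0$, and an aggregate inequality does not imply the per-coordinate one: with two or more binding inputs, the robust optimum can decrease one weighted input and increase another while keeping the $\alpha$-weighted total up. Your proposed ``clean resolution'' via non-domination cannot close this, for two reasons. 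First, domination is a component-wise notion, so a point with larger weighted output sum and smaller $q$-th weighted input need not dominate $(\NX\lii,\NY\lii)$ at all. Second, the robust reference point $(\UX\liu,\UY\liu)$ is assembled from the \emph{uncertain} data, so it does not live in the nominal PPS unless you first show $\Lambda^*_{\ih}=0$; domination arguments only bite inside a single production possibility set. The fallback route you offer then proves only the scalar conclusion $\gamma^*_{\ih}\geq\Tih$ and silently abandons the per-coordinate claim, which is part of what the lemma asserts.

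There is also a second, unacknowledged gap inside the aggregate step itself: to convert the hypothesis $\NY_m\lii<\UY_m\Llu$ into $\NY_m\liu>\NY_m\lii$ you need $\UY_{mi}\leq\NY_{mi}$ for every $i$ with $\Lambda^*_i>0$. This holds for $i\neq\ih$, whose outputs shift down by $\un$, but fails for $i=\ih$, whose output shifts up by $\un$; so again you need $\Lambda^*_{\ih}=0$, which your parenthetical ``when $\ih$ projects onto the remaining DMUs'' assumes but never establishes (and it is not automatic; when $\gamma^*_{\ih}=1$ the natural optimal solution has $\Lambda^*_{\ih}=1$). For contrast, the paper argues at the level of the LP and its slacks rather than the frontier geometry: it perturbs a single output entry $\NY_{mk}$, $k\neq\ih$, and splits on whether $\Lambda^*_k\un$ exceeds the nominal slack $\smm$ of the $m$-th output constraint; if it does not, the perturbation is absorbed ($\smu=\smm-\Lambda^*_k\un$) and nothing changes, while if it does, then $\smu=0$, the weighted output sum genuinely rises by $\Lambda^*_k\un-\smm$, and bindingness of input constraint $q$ in \eqref{Eq:Nominal} forces the weighted input sum up. If you want to salvage your geometric route, the missing ingredients are (i) identifying the facet normal with an optimal dual solution via complementary slackness, so that $\alpha_n=0$ for $n\notin Q$, and (ii) either proving the per-coordinate transfer under an explicit restriction (e.g.\ $|Q|=1$) or restating the first assertion of the lemma in the $\alpha$-weighted aggregate form that your argument actually delivers.
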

\begin{proof} 
Consider $\NY_{mk}<\UY_{mk},~k \in \mathcal{I},~ k \neq \ih$. Let an optimal solution to \eqref{Eq:uDEA} be $\left(\Lambda^*,\gamma^*_{\ih},\right.$ $\left. \Delta_{mi}^*=0,~\Delta_{mk}^*=\un, i \neq k \right) $. At an optimal solution to \eqref{Eq:uDEA}, the LHS of the $m\textsuperscript{th}$ constraint decreases by $\Lambda_{k}^* \un$ compared to the $m\textsuperscript{th}$ constraint in \eqref{Eq:Nominal}. If $\Lambda_{k}^* \un \leq \smm$, then the slack in the $m\textsuperscript{th}$ constraint at a uDEA optimal solution decreases by $\Lambda_{k}^* \un$ compared to the $m\textsuperscript{th}$ constraint in \eqref{Eq:Nominal}. Then $\smu=\smm-\Lambda_{k}^* \un$, and the decrease in output $\NY_{mk}$ has no effect on the efficiency score.
If $\Lambda_{k}^* \un>\smm$, then $\smu=0$ and $ \NY_m\Lll$ increases such that $\UY_m\Llu =\NY_m\Lll+\Lambda_{k}^* \un - \smm$, therefore $\UY_m\Llu >\NY_m\Lll$. Constraint $q$ is binding, therefore any change in $\Lambda$ will cause the weighted sum of the inputs to increase (or stay the same). 
\end{proof}

Similarly, a decrease in the weighted sum of the outputs results in the weighted sum of the inputs decreasing (or staying the same). 
\begin{lemma}\label{Lemma:SumDecrease}
If $\NY_m\Lll>\UY_m\Llu$ then $\NX_q \lii\geq \UX_q \liu,~q\in Q$ and hence $\gamma^*_{\ih} \leq \Ti$.
\end{lemma}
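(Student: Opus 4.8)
The plan is to mirror the structure of the proof of Lemma \ref{Lemma:SumIncrease} but with the inequalities reversed, exploiting the symmetry between increasing and decreasing the weighted output sum. Concretely, I would begin by considering the complementary perturbation: suppose $\NY_{mk} > \UY_{mk}$ for some $k \in \mathcal{I},~k \neq \ih$, which is the situation that drives a decrease in the weighted output sum. I would take an optimal solution to \eqref{Eq:uDEA} of the form $\left(\Lambda^*, \gamma^*_{\ih}, \Delta_{mi}^* = 0,~\Delta_{mk}^* = -\un,~ i \neq k\right)$, so that the left-hand side of the $m$\textsuperscript{th} output constraint \emph{increases} by $\Lambda_k^* \un$ relative to the corresponding constraint in \eqref{Eq:Nominal} (the sign flips compared to Lemma \ref{Lemma:SumIncrease} because the uncertainty now reduces the data).

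Next I would split into the two cases used in the previous lemma, governed by how the perturbation $\Lambda_k^* \un$ compares to the available slack. In the first case, where the change can be absorbed, the slack $\smu$ simply adjusts and the weighted output sum $\UY_m \Llu$ need not fall below $\NY_m \Lll$; I would argue this either leaves the efficiency score unchanged or is subsumed into the boundary case. In the second case, the binding output constraint forces a genuine reduction, giving $\UY_m \Llu < \NY_m \Lll$. Since input constraint $q \in Q$ is binding, any reduction in the attainable weighted output must be accompanied by a corresponding reduction (or no change) in the weighted sum of the inputs, because the optimiser is no longer compelled to keep the inputs at their previous level to satisfy the now-slacker output requirement. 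This yields $\NX_q \lii \geq \UX_q \liu$ for $q \in Q$.

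Finally, from the input inequality I would deduce the claim on efficiency scores. A smaller weighted input sum at the binding constraint means that the proportional reduction of the DMU $\ih$ inputs needed to reach the frontier is no greater than in the nominal problem, so $\gamma^*_{\ih} \leq \Ti$; equivalently, reducing the effective outputs of the competing DMUs makes DMU $\ih$ relatively no better off, so its robust efficiency score cannot exceed the nominal one. I would close by noting that this is exactly the mirror image of Lemma \ref{Lemma:SumIncrease}, so the bookkeeping is symmetric.

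I expect the main obstacle to be handling the boundary case cleanly and making precise the phrase ``causes the weighted sum of the inputs to decrease (or stay the same)'' at the binding constraint $q$. In Lemma \ref{Lemma:SumIncrease} this step is asserted rather than derived in detail, relying on the fact that constraint $q$ is binding; I would need to check that the argument genuinely uses only the binding status of $q$ and the monotonic dependence of the input aggregate on $\Lambda$, rather than any property that breaks under a decrease. The subtle point is the direction of causality: an output reduction loosens the output constraints, so the feasible region for $\lambda$ enlarges in the relevant direction, and I must verify that this enlargement can only lower (not raise) the minimal attainable weighted input at $q$, which is where I anticipate spending the most care.
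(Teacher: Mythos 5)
There is a genuine gap, and it is a directional one: your chosen perturbation points the wrong way. For DMU $\ih$'s efficiency score to \emph{decrease}, the output data of an efficient DMU $k\neq\ih$ must \emph{increase}, not decrease. In \eqref{Eq:Nominal} the $m$\textsuperscript{th} output constraint reads $\NY_m\lambda\geq \NY_{m\ih}$, so setting $\Delta^*_{mk}=+\un$ (i.e. $\UY_{mk}=\NY_{mk}+\un$) raises the LHS by $\lambda_k\un$ at fixed $\lambda$, loosens the constraint, and enlarges the feasible region of \eqref{Eq:RobustBoxDEA}; only then can the weighted input sum at a binding constraint $q\in Q$, and hence $\Tih$, fall. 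Your perturbation $\Delta^*_{mk}=-\un$ does the opposite: at fixed $\lambda$ the LHS \emph{falls} by $\Lambda^*_k\un$ (your claim that it rises is a sign error), the constraint tightens, the feasible region shrinks, and the optimal value can only go up, i.e. you would end up proving $\gamma^*_{\ih}\geq\Tih$, which is the conclusion of Lemma \ref{Lemma:SumIncrease}, not of this lemma. The subsequent steps inherit the error: a reduction in DMU $k$'s output data does not make the output requirement ``slacker'' and does not enlarge the feasible region for $\lambda$; both statements are exactly reversed. This direction is corroborated elsewhere in the paper: Result \ref{Result:Changes} and Theorem \ref{Thm:MaxIncUFix} identify \emph{decreasing} the efficient DMUs' outputs as the perturbation that does not decrease (indeed maximally increases) DMU $\ih$'s score.

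In fairness, the confusion is traceable to the paper itself: its proof of this lemma is the single line ``Analogous to Lemma \ref{Lemma:SumIncrease}'', and the opening of the proof of Lemma \ref{Lemma:SumIncrease} mis-states the premise as $\NY_{mk}<\UY_{mk}$ with $\Delta^*_{mk}=\un$, even though the body of that proof (LHS decreasing by $\Lambda^*_k\un$, the slack $\smm$ being consumed, the explicit phrase ``the decrease in output $\NY_{mk}$'') is consistent only with a \emph{decrease} of DMU $k$'s output. You mirrored the mis-stated opening and then bent the intermediate claims to fit, which reverses the mechanism. The correct analogue is: take $\Delta^*_{mk}=+\un$; if the extra slack created in the $m$\textsuperscript{th} constraint is not exploited, the optimum is unchanged; if it is exploited, $\lambda$ can shift so that $\UY_m\Llu\leq\NY_m\Lll$, and since $\NX_q\lii$ was already minimal subject to the old, tighter output constraint, the relaxation can only yield $\UX_q\liu\leq\NX_q\lii$ for $q\in Q$, whence $\gamma^*_{\ih}\leq\Tih$.
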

\begin{proof}
Analogous to Lemma \ref{Lemma:SumIncrease}.
\end{proof}
Similarly, an increase in the PPS results in the efficiency score of DMU $\ih$ staying the same or reducing.
\begin{proposition}\label{Prop:IncreasePPS}
Let $T,~T'$ be PPSs of \eqref{Eq:Nominal} such that $T'\subset T$. Let $\Ti(\theta^{i'})$ be the efficiency score for DMU $i$ in $T(T')$ then $\Ti \leq \theta^{i'}$. 
\end{proposition}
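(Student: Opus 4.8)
The plan is to recast the efficiency score as the smallest scaling factor for which the radially contracted DMU still lies in the production possibility set, and then read off the inequality directly from the inclusion $T' \subset T$. First I would note that, by Definition \ref{Def:PPS}, the feasibility constraints of \eqref{Eq:Nominal} for DMU $i$ are exactly the requirement that the point $(\theta \Nx^{i}, \Ny^{i})$ belong to $T$: indeed, $(\theta \Nx^{i}, \Ny^{i}) \in T$ holds precisely when there is some $\lambda \geq 0$ with $e^T \lambda = 1$, $\NX \lambda \leq \theta \Nx^{i}$ and $\NY \lambda \geq \Ny^{i}$, which are the constraints of \eqref{Eq:Nominal} verbatim. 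Hence I can write $\Ti = \min\{\theta : (\theta \Nx^{i}, \Ny^{i}) \in T\}$ and, identically, $\theta^{i'} = \min\{\theta : (\theta \Nx^{i}, \Ny^{i}) \in T'\}$.

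With both scores expressed this way, the comparison becomes a statement about minimising over nested sets. Since $T' \subset T$, any $\theta$ for which the contracted point lies in $T'$ also places it in $T$, so
\[
\{\theta : (\theta \Nx^{i}, \Ny^{i}) \in T'\} \subseteq \{\theta : (\theta \Nx^{i}, \Ny^{i}) \in T\}.
\]
Minimising the same quantity over a smaller set cannot yield a smaller optimum — any minimiser over the smaller set is admissible for the larger one — so $\theta^{i'} \geq \Ti$, which is the claim.

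I do not anticipate a real obstacle, since the argument reduces to monotonicity of the minimum under set inclusion once the efficiency score is identified with PPS membership. The only points needing a line of justification are the reformulation step, where I must confirm that the constraints of \eqref{Eq:Nominal} coincide with $(\theta \Nx^{i}, \Ny^{i}) \in T$, and the attainment of both minima, which follows because each set of admissible scaling factors is the preimage of the closed polyhedron $T$ (respectively $T'$) under the affine map $\theta \mapsto (\theta \Nx^{i}, \Ny^{i})$ and is bounded below by $0$. Given that the proposition already presupposes that both efficiency scores exist, these feasible sets are non-empty, and writing $\min$ rather than $\inf$ is justified.
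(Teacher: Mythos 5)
Your proof is correct and takes essentially the same route as the paper's: both rest on identifying the efficiency score with the smallest radial contraction factor $\theta$ for which $(\theta \Nx^{i},\Ny^{i})$ remains in the PPS, and then reading the inequality $\Ti \leq \theta^{i'}$ off the inclusion $T' \subset T$ by minimality/feasibility transfer. Your write-up is in fact tidier than the paper's own proof, which needlessly restricts to inefficient DMUs and contains a notational slip (it states $T \subset T'$ and derives the reversed inequality $\Tih \geq \theta^{\ih'}$, i.e.\ it proves the claim with the roles of $T$ and $T'$ interchanged), whereas your explicit reformulation $\Ti = \min\{\theta : (\theta \Nx^{i},\Ny^{i}) \in T\}$ together with the attainment argument makes the monotonicity step airtight.
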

\begin{proof}
Let $\Tih<1(\theta^{\ih'}<1)$ be the efficiency score for DMU $\ih$ in the nominal DEA problem \eqref{Eq:Nominal} with PPS $T(T')$. DMU $\ih$ is inefficient and has nominal DEA projection points $(\Tih x^{\ih},y^{\ih}) \in T$, $(\theta^{\ih'} x^{\ih},y^{\ih}) \in T'$. $T\subset T'$ therefore, $\Tih x^{\ih}\geq \theta^{\ih'}x^{\ih}$ and $\Tih \geq \theta^{\ih'}$ because $x^{\ih}$ is fixed. 
\end{proof}
Consequently, an increase in the PPS results in the efficiency score of DMU $\ih$ staying the same or reducing.

We now consider the possible optimal solutions for DMU $\ih$. We show that there exists an optimal solution to \eqref{Eq:uDEA} for box uncertainty, Definition \ref{Def:BoxUncertainty}, for DMU $\ih$ such that DMUs that are inefficient in \eqref{Eq:Nominal} will not become peers to DMU $\ih$. In this way, when investigating the effect of uncertainty on DMU $\ih$ we only consider DMUs that are efficient in the nominal DEA problem \eqref{Eq:Nominal}.
\begin{proposition}\label{Prop:NoPeers}
Let DMU $l,~l\in\mathcal{I},~l\neq \ih$, be an inefficient DMU, $E^l<1$. For DMU $\ih$ with $E^{\ih}\leq 1$ and box uncertainty, Definition \ref{Def:BoxUncertainty}, there exists an optimal solution to \eqref{Eq:uDEA},  $(\Lambda^*,\gamma^*_{\ih},\Delta^*,\nabla^*)$, such that $\Lambda^*_l=0$. 
\end{proposition}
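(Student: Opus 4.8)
The plan is to show that if some inefficient DMU $l$ has $\Lambda^*_l > 0$ in an optimal solution to \eqref{Eq:uDEA}, then we can redistribute the weight placed on $l$ onto the nominal-efficient DMUs that dominate $l$, without increasing the amount of uncertainty $\map(\U)$ and without decreasing $\mathcal{E}^{\ih}(\mathcal{U})$. The key structural fact I would exploit is that $l$ being inefficient in \eqref{Eq:Nominal} means, by Definition \ref{Def:EffDMU}, that $E^l < 1$, so $l$ lies strictly in the interior of (or on a non-efficient part of the boundary of) the production possibility set $T$ from Definition \ref{Def:PPS}. Concretely, inefficiency of $l$ gives weights $\mu \geq 0$ with $e^T\mu = 1$ and $\mu_l = 0$ such that the point $(x^l, y^l)$ is weakly dominated: $\NX \mu \leq E^l x^l \leq x^l$ and $\NY \mu \geq y^l$. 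This convex combination of the other DMUs is componentwise at least as good in every output and no worse in every input than $l$ itself.

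First I would take an arbitrary optimal solution $(\Lambda^*, \gamma^*_{\ih}, \Delta^*, \nabla^*)$ to \eqref{Eq:uDEA} and suppose for contradiction (or simply for the construction) that $\Lambda^*_l > 0$. Then I would define a new $\lambda$-vector $\hat\Lambda$ by removing the mass from coordinate $l$ and adding $\Lambda^*_l \mu$, i.e. $\hat\Lambda = \Lambda^* - \Lambda^*_l e_l + \Lambda^*_l \mu$, which preserves nonnegativity and the normalisation $e^T\hat\Lambda = 1$. The substitution must be carried out on the uncertain data, so I would work with the virtual DMUs of Definition \ref{Def:VirtualDMU}: under box uncertainty each DMU $i \neq \ih$ contributes its worst realisation $(x^i + \sigma, y^i - \sigma)$ to the constraints, and since the dominance inequalities for $l$ hold at the nominal data they continue to hold after the uniform $\pm\sigma$ shift, because the shift is identical across all DMUs. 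Second, I would verify that $\hat\Lambda$ is feasible for \eqref{Eq:RobustBoxDEA} at the same $\sigma = \map(\U)$: the output constraints stay satisfied because replacing $\Lambda^*_l (y^l - \sigma)$ by $\Lambda^*_l \NY\mu \geq \Lambda^*_l y^l \geq \Lambda^*_l(y^l-\sigma)$ can only raise the weighted output, and the input constraints stay satisfied because $\Lambda^*_l \NX\mu \leq \Lambda^*_l x^l$ can only lower the weighted input; these are exactly the monotonicity directions captured by Lemmas \ref{Lemma:SumIncrease} and \ref{Lemma:SumDecrease}, so the efficiency score does not decrease.

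Third, since the replacement uses the same $\Delta^*, \nabla^*$ restricted to the relevant columns and introduces no new uncertainty (the $\mu$-combination is formed from DMUs already present, and their uncertainty columns are unchanged), the amount $\map(\U) = \sigma$ is unchanged; hence $\gamma^*_{\ih}$ is still attainable at no greater cost, so $\hat\Lambda$ is also optimal. If $\mu$ itself places positive weight on further inefficient DMUs, I would iterate the argument, or invoke a cleaner one-shot version by taking $\mu$ supported only on extreme points of $T$, which are efficient by the remark following Definition \ref{Def:ExtremePoint}. After finitely many such redistributions (the number of DMUs is finite) all weight rests on nominal-efficient DMUs, yielding an optimal solution with $\Lambda^*_l = 0$, as claimed.

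The main obstacle I anticipate is the bookkeeping around the uncertain data rather than the convexity idea itself: one must check carefully that shifting weight off $l$ does not force a compensating change elsewhere that would require enlarging $\sigma$, and that the dominance certificate for $l$ survives the uniform box perturbation. The clean resolution is precisely that box uncertainty applies the same $\pm\sigma$ offset to every DMU (Definition \ref{Def:VirtualDMU}), so the inefficiency of $l$ is scale/translation-robust in the needed direction; making that invariance explicit is the crux of the proof, and the monotonicity Lemmas \ref{Lemma:SumIncrease} and \ref{Lemma:SumDecrease} do the remaining work of guaranteeing $\mathcal{E}^{\ih}(\mathcal{U})$ is not harmed.
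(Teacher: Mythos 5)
Your proposal is correct and reaches the proposition by a genuinely different route than the paper. The paper argues geometrically: it fixes the optimal uncertainty so that \eqref{Eq:uDEA} collapses to \eqref{Eq:RobustBoxDEA}, invokes Theorem \ref{Thm:ExtremePoints} and Proposition \ref{Prop:IncreasePPS} to compare the perturbed production possibility set $T'$ with $T$, and then claims that if $\Lambda^*_l>0$ then $l^u$ would lie on the efficient frontier of $T'$ without being an extreme point of $T'$ (because $l$ is not extreme in $T$), hence sits in the interior of an efficient facet and can be rewritten as a convex combination of nominally efficient DMUs. You replace this extreme-point/facet reasoning with an explicit algebraic certificate: a convex multiplier $\mu$ with $\mu_l=0$ witnessing $E^l<1$, the redistribution $\hat\Lambda=\Lambda^*-\Lambda^*_l e_l+\Lambda^*_l\mu$, and a constraint-by-constraint check that feasibility at the same $\theta=\gamma^*_{\ih}$ and the same $\map(\U)$ is preserved. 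Your route is more self-contained and constructive: it does not lean on the paper's only loosely justified claim that extreme points of the perturbed PPS must come from extreme points of $T$ or of the uncertainty sets, and it converts \emph{any} optimal solution into one with $\Lambda^*_l=0$. What the paper's route buys is brevity given its earlier machinery, and the facet picture it reuses in Section \ref{Sec:BoxUncertainty}.

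Two points need tightening. First, your displayed inequalities conflate nominal and perturbed data: the term entering the constraint is $\Lambda^*_l\sum_i\mu_i\bar y^i_m$ with the \emph{perturbed} columns, not $\Lambda^*_l \NY_m\mu$; the correct chain is $\sum_i\mu_i\bar y^i_m\ge \NY_m\mu-\un\ge \NY_{ml}-\un=\bar y^l_m$ (and dually $\sum_i\mu_i\bar x^i_n\le \NX_n\mu+\un\le \NX_{nl}+\un=\bar x^l_n$), which is exactly the uniform-shift invariance you state in words. Second, and more substantively, that invariance holds only for the realisation in which every DMU $i\neq\ih$ is shifted by the same $(+\un,-\un)$ and $\ih$ by $(-\un,+\un)$; for an arbitrary admissible box realisation $(\Delta^*,\nabla^*)$ the dominance certificate can fail (take $\Delta^*_{ml}=+\un$ and $\Delta^*_{mi}=-\un$ for $i\neq l$, which requires slack $2\un$ in the nominal dominance). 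So your argument needs the preliminary step that \eqref{Eq:uDEA} admits an optimal solution using this uniform extreme realisation --- which is what Definition \ref{Def:VirtualDMU} encodes and Theorem \ref{Thm:MaxIncUFix} later establishes. Since the paper's own proof makes the same implicit move (its ``smallest possible PPS''), this is a presentational caveat rather than a fatal gap, but stating it explicitly, or proving the proposition after Theorem \ref{Thm:MaxIncUFix}, would close the argument.
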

\begin{proof}
For any fixed amount of uncertainty, \eqref{Eq:uDEA} becomes \eqref{Eq:RobustBoxDEA}. The PPS of \eqref{Eq:RobustBoxDEA} is dependent on $\map(\U)$, the amount of uncertainty. From Theorem \ref{Thm:ExtremePoints}, an optimal solution to \eqref{Eq:RobustBoxDEA} will occur at an extreme point of the feasible region. Let $T$ be the PPS of the nominal DEA problem \eqref{Eq:Nominal} and $T'$ be the smallest possible PPS of \eqref{Eq:RobustBoxDEA} when we have $\map(\U)$ such that $\Delta=\Delta^*$ and $ \nabla=\nabla^*$. Then, $T'\subset T$, and from Proposition \ref{Prop:IncreasePPS}, $\gamma^*_{\ih}=\Eih\geq E^{\ih}$.
If $\Lambda^*_l>0, ~\mathcal{E}^{l}(\mathcal{U})=1$ and DMU $l^u$ must lie on the efficient frontier at the boundary of $T'$. DMU $l^u$ cannot be an extreme point of $T'$ as it is not an extreme point of $T$. Therefore, if $\mathcal{E}^{l}(\mathcal{U})=1$,  DMU $l^u$ lies on the interior of an efficient facet of $T'$ and can be written as a convex combination of DMUs that are efficient in \eqref{Eq:Nominal}. Hence, there is an optimal solution such that $\Lambda^*_{l}=0$ and since $\Delta=\Delta^*$ and $ \nabla=\nabla^*$, $\Lambda^*_{l}=0$ at an optimal solution to \eqref{Eq:uDEA}.
\end{proof}
In this way, DMUs that are efficient in the nominal DEA problem \eqref{Eq:Nominal} remain efficient in the uDEA model \eqref{Eq:uDEA}. Therefore, we assume that from the $I$ DMUs, we have $I-1$ efficient DMUs and a single inefficient DMU, DMU $\ih$. Then $E^{\ih}<1$ and $\Tii=1~ \forall i\in \mathcal{I},~i\neq \ih$. DMUs that are efficient in \eqref{Eq:Nominal} will remain efficient, at least until the amount of uncertainty is equal to the minimum amount of uncertainty required for DMU $\ih$ to become efficient. 


\section{The specific case of box uncertainty}\label{Sec:BoxUncertainty}
We now focus on the specific case of box uncertainty. Therefore, when we refer to \eqref{Eq:uDEA}, we assume we have box uncertainty as defined in Definition \ref{Def:BoxUncertainty}. In Section \ref{Sec:FixedU}, we consider a fixed amount of uncertainty, i.e. the robust DEA model \eqref{Eq:RobustBoxDEA}, then in Section \ref{SubSec:DEADistance}, we introduce the concept of DEA distance which allows us to explore the effect of changing uncertainty in Section \ref{SubSec:ChangeU}.
\subsection{Fixed uncertainty}\label{Sec:FixedU}
First, we consider fixed uncertainty, i.e. we omit $\min_{\U \in \Omega}$ in \eqref{Eq:uDEA} and so we consider \eqref{Eq:RobustBoxDEA}. 
By considering single and multiple changes in the data of the inefficient or efficient DMUs and then multiple changes to all DMUs, we show that for a fixed amount of uncertainty $\un$, the following theorem holds.
\begin{theorem} \label{Thm:MaxIncUFix}
To maximise the possible increase in efficiency score for DMU $\ih$, solving \eqref{Eq:RobustBoxDEA} will result in the following uncertainty being selected:\\ $\Delta_{mi}=\Dxk=-\un,~\Dyk=\nabla_{ni}=\un,~i \in \mathcal{I},~i \neq\ih,~~m\in \mathcal{M},~n \in \mathcal{N}$.
\end{theorem}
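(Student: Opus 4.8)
The plan is to view \eqref{Eq:RobustBoxDEA} for $i=\ih$ as a function of the data realisation and to maximise the resulting score $\theta^{\ih}$ over all realisations lying in the box, i.e.\ over all $\Delta_{mi},\nabla_{ni}\in[-\sigma,\sigma]$. By Theorem \ref{Thm:ExtremePoints} an optimal realisation may be taken at a corner of the box, so every entry equals either $-\sigma$ or $+\sigma$ and the task reduces to fixing the correct sign for each entry. Throughout I would keep the amount of uncertainty $\sigma$ fixed and, using Proposition \ref{Prop:NoPeers}, reduce to the case of a single inefficient DMU $\ih$ whose peers are efficient DMUs, so that the only entries that matter are those of $\ih$ and of the efficient DMUs.

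The core of the argument is a coordinatewise monotonicity claim: holding all other entries fixed at arbitrary values in $[-\sigma,\sigma]$, the score $\theta^{\ih}$ is (i) non-decreasing in each own-output entry $\Delta_{m\ih}$, (ii) non-increasing in each own-input entry $\nabla_{n\ih}$, (iii) non-increasing in each rival-output entry $\Delta_{mi}$ with $i\neq\ih$, and (iv) non-decreasing in each rival-input entry $\nabla_{ni}$ with $i\neq\ih$. Granting these, maximising a function monotone in each coordinate over a box is achieved at the corner where every coordinate is pushed to its favourable extreme, namely $\Delta_{m\ih}=\sigma$, $\nabla_{n\ih}=-\sigma$, $\Delta_{mi}=-\sigma$, $\nabla_{ni}=\sigma$. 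This corner is exactly the virtual DMU configuration of Definition \ref{Def:VirtualDMU}, which yields the claimed selection.

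To establish the monotonicity I would characterise $\theta^{\ih}$ as the smallest $\theta$ for which the constraint system of \eqref{Eq:RobustBoxDEA} is feasible in $\lambda$, and show that each claimed change weakly shrinks the feasible set of $\lambda$ at every fixed $\theta$, so the smallest feasible $\theta$ can only increase. For the rival changes (iii),(iv) this is immediate, since decreasing a rival output or increasing a rival input tightens an output or input constraint for every $\lambda$. The own-data changes (i),(ii) are subtler because the $\ih$-th column appears both in the benchmark matrix and on the right-hand side; here I would rewrite the $m$-th output constraint as $\sum_{i\neq\ih}(\NY_{mi}+\Delta_{mi})\lambda_i\geq(\NY_{m\ih}+\Delta_{m\ih})(1-\lambda_{\ih})$ and the $n$-th input constraint analogously, then use $0\leq\lambda_{\ih}\leq 1$ together with $\theta\geq\lambda_{\ih}$ on the feasible region (a consequence of strictly positive inputs) to conclude that raising $\Delta_{m\ih}$ and lowering $\nabla_{n\ih}$ again tighten the system. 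Following the paper's outline I would prove these first for a single changed entry, then for several entries of one DMU, and finally combine them across all DMUs, the last step being legitimate precisely because each monotonicity holds for arbitrary fixed values of the remaining entries.

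The main obstacle I anticipate is the indirect coupling between the output side and the score: $\theta^{\ih}$ appears in no output constraint, so an output change alters only which $\lambda$ are feasible and influences $\theta^{\ih}$ solely through the input constraints. This is exactly the gap bridged by Lemmas \ref{Lemma:SumIncrease} and \ref{Lemma:SumDecrease}, which I would invoke to convert an increase (decrease) in the weighted output combination forced by the output constraints into a weak increase (decrease) in the weighted input sum along the binding input constraints guaranteed by Proposition \ref{Prop:BindingInput}. The delicate point is that pushing several entries simultaneously to their extremes may change the set $Q$ of binding input constraints, so I would take care to apply the feasible-set-inclusion argument to the systems as a whole rather than to a fixed binding set, ensuring the conclusion survives the change of active constraints.
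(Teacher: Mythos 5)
Your proof is correct, but it takes a genuinely different route from the paper's. The paper assembles Theorem~\ref{Thm:MaxIncUFix} from a case analysis of \emph{single} data perturbations argued through slack variables and binding input constraints (Lemmas~\ref{Lemma:EffDMUInput}, \ref{Lemma:EffDMUOutput} and \ref{Lemma:IneffOut}, summarised in Result~\ref{Result:Changes}), and then passes to simultaneous changes of all data in Proposition~\ref{Prop:AllChange}, whose proof again selects a binding input constraint $q$ and computes the resulting change in score; the by-product is the explicit maximal increase $\max_{q\in Q}\big(\max_{i\neq\ih}\NX_{qi}-\min_{i\neq\ih}\NX_{qi}+2\un\big)/\NX_{q\ih}$, which the paper reuses afterwards. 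You instead prove coordinatewise monotonicity of the optimal value of \eqref{Eq:RobustBoxDEA} in each entry of $(\Delta,\nabla)$ via feasible-set inclusion at every fixed $\theta$, and conclude that the maximum over the box is attained at the favourable corner, i.e.\ exactly the virtual-DMU configuration of Definition~\ref{Def:VirtualDMU}. Your route buys two things the paper's argument handles only informally. First, it never reasons about how optimal solutions or the active set $Q$ move as the data change, which is precisely the step the paper glosses over when it asserts that the single-change analysis ``can be repeated'' for multiple changes; your closing remark about applying the inclusion argument to the constraint systems as a whole, rather than to a fixed binding set, addresses this squarely. Second, your rewriting of the own-column constraints with the nonnegative factors $(1-\lambda_{\ih})$ and $(\theta-\lambda_{\ih})$ (the latter justified by strictly positive inputs, consistent with the paper's assumption in Proposition~\ref{Prop:BindingInput}) cleanly resolves the coupling of DMU $\ih$'s data appearing on both sides of its own constraints, which the paper dispatches with ``analogous to Lemma~\ref{Lemma:SumIncrease}''. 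What you forgo is the quantitative magnitude of the increase given by Proposition~\ref{Prop:AllChange}; but for the theorem as stated, your parametric-feasibility argument is complete and, if anything, more rigorous than the original.
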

Consider a single change in the data, i.e. a single $\NX_{ni}$ or $\NY_{mi}$ changes by $\pm \un$ while the remaining data are fixed. Binding input constraints result in the largest change in efficiency score. This can be seen in Lemma \ref{Lemma:SumIncrease}, where the role of slack variables and the possible changes in the weighted sum of the inputs and outputs are demonstrated. Hence, we consider only binding input constraints. By considering the possible single changes, we have the following lemmas.
\begin{lemma}\label{Lemma:EffDMUInput}
For a single input $q\in \mathcal{N}$ of a DMU changed by $\un$, the maximum increase in $E^{\ih}$ that can occur 
is $\frac{\un}{\NX_{q\ih}}$. 
\end{lemma}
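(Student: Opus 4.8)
The plan is to pin the efficiency score to a single binding input constraint and then track how a lone perturbation of size $\un$ in input $q$ propagates through that constraint. By Proposition \ref{Prop:BindingInput}, any optimal solution $(\Lambda, E^{\ih})$ of \eqref{Eq:Nominal} has at least one binding input constraint; as noted just before the statement, and as Lemma \ref{Lemma:SumIncrease} makes precise, a change in a non-binding input is first absorbed by slack and so produces a smaller effect, so I take $q$ to be binding. Binding means $\sqq = 0$, hence $\NX_q \Lambda = E^{\ih}\NX_{q\ih}$, i.e. $E^{\ih} = \NX_q\Lambda / \NX_{q\ih}$. This ratio form is the object I perturb.

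First I would treat the case where the perturbed input belongs to some DMU $i \neq \ih$, so the change enters the numerator. Replacing $\NX_{qi}$ by $\NX_{qi}+\un$ raises the weighted sum $\NX_q\Lambda$ by exactly $\Lambda_i\un$ while leaving every other input constraint and all output constraints untouched. Keeping the old $\Lambda$ as a feasible point and re-minimising $\theta$ can only lower the score, so the new score $E'$ satisfies $E' \le E^{\ih} + \Lambda_i\un/\NX_{q\ih}$. Since $e^T\Lambda = 1$ and $\Lambda \ge 0$ force $\Lambda_i \le 1$, the increase is at most $\un/\NX_{q\ih}$, with equality attainable when $\Lambda_i = 1$ (DMU $\ih$ projected onto the single DMU $i$), which confirms that this bound is the maximum rather than merely an upper estimate.

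Next I would treat the perturbation of DMU $\ih$'s own input $q$, which enters the denominator: $\NX_{q\ih} \to \NX_{q\ih}-\un$. Evaluating at the old $\Lambda$ gives a score $E^{\ih}\NX_{q\ih}/(\NX_{q\ih}-\un)$, and re-optimising only decreases it, so $E' \le E^{\ih}\NX_{q\ih}/(\NX_{q\ih}-\un)$, while always $E' \le 1$. A short case split then finishes it: if $E^{\ih} \le (\NX_{q\ih}-\un)/\NX_{q\ih}$ then $E^{\ih}\un/(\NX_{q\ih}-\un) \le \un/\NX_{q\ih}$; otherwise the score saturates at $1$ and the increase is $1 - E^{\ih} < \un/\NX_{q\ih}$. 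Either way the bound holds, and combining the two cases yields the claimed maximum of $\un/\NX_{q\ih}$.

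The step I expect to be the main obstacle is the denominator case: unlike the numerator case it is genuinely nonlinear in $\un$, so the bound is not immediate and needs the saturation-at-one argument above. The supporting details I would be careful about are, first, justifying that restricting attention to a binding input constraint $q$ is without loss (the preceding discussion together with Lemma \ref{Lemma:SumIncrease} covers this), and second, the repeated use of the principle \emph{old $\Lambda$ gives an upper bound and re-optimisation can only reduce $\theta$}, which lets me avoid arguing that $q$ remains the binding constraint after the perturbation.
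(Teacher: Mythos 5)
Your proof is correct, and it reaches the bound by the same basic skeleton as the paper --- fix a binding input constraint $q$, note that the perturbation enters that constraint weighted by a multiplier $\Lambda_i \leq 1$ (forced by $e^T\lambda = 1$, $\lambda \geq 0$), and conclude the score moves by at most $\un/\NX_{q\ih}$ --- but the logical mechanism differs, and in one place yours is genuinely stronger. The paper argues in the ``exact increase'' direction: since constraint $q$ is binding, $\NX_q\lii$ cannot decrease, so the perturbation $\lru\un$ must be absorbed entirely by $\theta$, giving an increase of exactly $\un\lru/\NX_{q\ih}$, maximal at $\lru = 1$; it then disposes of the case where DMU $\ih$'s own input is perturbed with a single word (``Likewise''). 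You instead exhibit explicit feasible points of the perturbed LP built from the old optimal $\Lambda$ and use that re-minimisation can only lower $\theta$; this yields clean upper bounds on the new optimum without claiming anything about what the perturbed optimal solution actually looks like, and in particular without needing $q$ to remain binding or $\Lambda$ to remain optimal after the perturbation. More importantly, your treatment of the denominator case $\NX_{q\ih} \to \NX_{q\ih} - \un$ fills a real gap hidden behind the paper's ``Likewise'': at fixed $\Lambda$ the score becomes $E^{\ih}\NX_{q\ih}/(\NX_{q\ih}-\un)$, so the increase $E^{\ih}\un/(\NX_{q\ih}-\un)$ is nonlinear in $\un$ and exceeds $\un/\NX_{q\ih}$ whenever $E^{\ih} > (\NX_{q\ih}-\un)/\NX_{q\ih}$; your saturation argument (the unit solution stays feasible in the perturbed problem, so the new score is at most $1$, and $1 - E^{\ih} < \un/\NX_{q\ih}$ in exactly that regime) is what rescues the stated bound, and the paper never supplies it. The only loose ends on your side are minor and match the paper's own level of rigor: attainability of the bound when $\Lambda_i = 1$ is asserted rather than exhibited by an instance, and the sign directions that cannot produce an increase (decreasing another DMU's input, increasing $\ih$'s own input) are left implicit, though they are dismissed by the same feasibility argument you already use.
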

\begin{proof}
Consider an increase in an efficient DMU $r$'s input, $q$. The LHS of the $q$\textsuperscript{th} constraint in \eqref{Eq:RobustBoxDEA} increases by $\lru \un$ compared to the $q$\textsuperscript{th} constraint in \eqref{Eq:Nominal}. Therefore, $\NX_q\lii$ must decrease or $E^{\ih} \NX_{nq}$ must increase. However, $\NX_q \lii$ cannot decrease because the constraint for input $q$ is binding so $\NX_q \lii$ is already as small as possible. This means $\NX_q \liu=\NX_q \lii$. DMU $\ih$'s data are fixed, therefore $E^{\ih}$ must increase by $\gamma^*_{\ih}-E^{\ih}=\frac{\un\lru}{\NX_{q\ih}}$. This is maximal when $\lru=1$ and $E^{\ih}$ increases by $\frac{ \un}{\NX_{q\ih}}$. Similarly, if a single input of DMU $r$ decreases by $\un$, $E^{\ih}$ must stay the same or decrease by a maximum of $\frac{\un }{\NX_{q\ih}}$. 

Likewise, for a single input $q$ of DMU $\ih$ changed by $\un$, the maximum increase in $E^{\ih}$ that can occur is $\frac{\un}{\NX_{q\ih}}$. This occurs when the input of DMU $\ih$ is reduced by $\un$. Therefore, the maximum increase in $E^{\ih}$ that can occur when a single input of a DMU is changed by $\un$ is $\frac{\un}{\NX_{q\ih}}$.
\end{proof}
\begin{lemma}\label{Lemma:EffDMUOutput}
For a single output of an efficient DMU changed by $\un$, the maximum increase in $E^{\ih}$ that can occur is 
 \begin{equation}\label{Eq:EffDMUOutput}
   \max _{~q \in Q}~ \frac { \max _{\substack{i\neq \ih}} \NX_{qi}- \min _{\substack{i\neq \ih}} \NX_{qi} }{\NX_{q\ih}}.
 \end{equation}
\end{lemma}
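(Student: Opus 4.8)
The plan is to understand what happens to the efficiency score $E^{\ih}$ when a single output of an efficient DMU is changed by $\un$, and to find the worst-case (maximum increase). By Lemma~\ref{Lemma:SumIncrease} and the accompanying discussion, changes in outputs affect the efficiency score only through their interaction with the binding input constraints $q \in Q$. So the natural first step is to trace how a single output change propagates to a binding input constraint and thence to $E^{\ih}$.

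First I would change a single output, say output $m$, of an efficient DMU $r$ by $+\un$ (i.e. $\Delta_{mr}=\un$, all other uncertainty zero). By Lemma~\ref{Lemma:SumIncrease}, if this change is large enough to exhaust the slack $\smm$ in constraint $m$, it forces an increase in the weighted sum of the outputs $\UY_m\Llu$, which in turn (constraint $q$ being binding) forces the weighted sum of the relevant binding input $\NX_q\liu$ to increase, raising the efficiency score. The key observation is that increasing output $m$ of DMU $r$ is essentially equivalent, in terms of its effect on the projection point, to shifting weight $\Lambda$ among the DMUs along the (now perturbed) efficient facet. The maximum leverage on $E^{\ih}$ therefore comes from the \emph{spread} of the binding input values $\NX_{qi}$ across the efficient DMUs: moving an output allows the optimiser to effectively reallocate $\Lambda$ from the DMU with the largest $\NX_{qi}$ toward the one with the smallest, or vice versa, while staying on the facet.

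Concretely, I would quantify the change in $\NX_q\liu$ induced by a unit output change and observe that the largest possible reallocation of the weighted binding input is bounded by $\max_{i\neq\ih}\NX_{qi}-\min_{i\neq\ih}\NX_{qi}$; normalising by the fixed value $\NX_{q\ih}$ (as in Lemma~\ref{Lemma:EffDMUInput}, since DMU $\ih$'s data stay fixed) gives the increase in $E^{\ih}$ as $\frac{\max_{i\neq\ih}\NX_{qi}-\min_{i\neq\ih}\NX_{qi}}{\NX_{q\ih}}$ for a given binding input $q$. Taking the maximum over all binding inputs $q\in Q$ then yields the claimed expression~\eqref{Eq:EffDMUOutput}. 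I would also check the symmetric case (decreasing the output by $\un$) to confirm it produces no larger increase, completing the maximisation.

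The main obstacle I anticipate is rigorously justifying the step from a single output perturbation to a bound expressed purely in terms of the binding input values $\NX_{qi}$. Unlike the input case in Lemma~\ref{Lemma:EffDMUInput}, an output change does not appear in the objective or directly in the input constraints, so its effect is entirely indirect: it reshapes which convex combination of efficient DMUs is feasible, and hence where DMU $\ih^u$ projects. The delicate part is arguing that the optimiser's best response to the output slack being consumed is precisely to swing $\Lambda$ between the extreme (max and min) binding-input DMUs, so that the difference of extreme $\NX_{qi}$ values is exactly the right bound — neither loose nor unattainable. I would handle this by an explicit two-DMU argument on the binding facet, showing both that this reallocation is feasible (giving a lower bound on the achievable increase) and that no feasible $\Lambda$ adjustment can exceed it (giving the matching upper bound).
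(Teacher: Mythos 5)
Your proposal is correct and takes essentially the same route as the paper: the paper's entire proof is the single line ``This follows from Lemmas \ref{Lemma:SumIncrease} and \ref{Lemma:SumDecrease}'', and your argument is exactly an unpacking of that citation --- a single output perturbation consumes slack, forces a change in the weighted output sum, and hence (via the binding input constraints $q \in Q$) a reallocation of $\Lambda$ whose effect on $E^{\ih}$ is bounded by the spread $\max_{i\neq\ih}\NX_{qi}-\min_{i\neq\ih}\NX_{qi}$ normalised by $\NX_{q\ih}$. Your explicit two-DMU feasibility/tightness argument supplies detail the paper leaves implicit, but the underlying mechanism is identical.
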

\begin{proof}
This follows from Lemmas \ref{Lemma:SumIncrease} and \ref{Lemma:SumDecrease}. 
\end{proof}
\begin{lemma}\label{Lemma:IneffOut}
For a single output of DMU $\ih$ changed by $\un$, if $\Dyk=\un,~ \gamma^*_{\ih} \geq \theta^{\ih},$ otherwise if $\Dyk=-\un, ~\gamma^*_{\ih} \leq \theta^{\ih}$. 
\end{lemma}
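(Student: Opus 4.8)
The plan is to derive this as a corollary of Lemmas \ref{Lemma:SumIncrease} and \ref{Lemma:SumDecrease}, in the same spirit as the proof of Lemma \ref{Lemma:EffDMUOutput}, by tracking how a single perturbation to DMU $\ih$'s own output $m$ forces a change of a definite sign in the weighted output sum $\UY_m \Llu$. I would split the argument along the two cases named in the statement, $\Dyk = \un$ and $\Dyk = -\un$, and treat the first in full since the second is symmetric.

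For $\Dyk = \un$, I would write out the $m$\textsuperscript{th} output constraint of \eqref{Eq:RobustBoxDEA} for DMU $\ih$ with every other entry of $\Delta$ and $\nabla$ set to zero. Because the perturbation sits in column $\ih$, it appears on both sides, and the constraint rearranges to $\NY_m \lambda \geq \NY_{m\ih} + \un(1 - \lambda_{\ih})$. Since DMU $\ih$ is inefficient one has $\lambda_{\ih} < 1$ (were $\lambda_{\ih} = 1$, then $e^T \lambda = 1$ and $\lambda \geq 0$ would force every other weight to zero and yield $\Tih = 1$, a contradiction), so the effective right-hand side exceeds the nominal value $\NY_{m\ih}$. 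I would then branch on the nominal output slack $\smm$: if it is large enough to absorb the tightening, the nominal $\Lll$ stays feasible with merely reduced slack, the weighted input sum is unchanged and $\gamma^*_{\ih} = \Tih$; otherwise the weighted output sum must rise, giving $\NY_m \Lll < \UY_m \Llu$, which is exactly the hypothesis of Lemma \ref{Lemma:SumIncrease}. That lemma then delivers $\NX_q \lii \leq \UX_q \liu$ for $q \in Q$ and hence $\gamma^*_{\ih} \geq \Tih$. Both branches give $\gamma^*_{\ih} \geq \Tih$.

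For $\Dyk = -\un$ the same constraint becomes $\NY_m \lambda \geq \NY_{m\ih} - \un(1 - \lambda_{\ih})$, so the right-hand side drops below $\NY_{m\ih}$, the constraint loosens, and the weighted output sum can only stay the same or fall, i.e. $\NY_m \Lll \geq \UY_m \Llu$. Applying Lemma \ref{Lemma:SumDecrease} then yields $\NX_q \lii \geq \UX_q \liu$ for $q \in Q$ and therefore $\gamma^*_{\ih} \leq \Tih$, completing the case.

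The step I expect to be the main obstacle is the bridge itself. Lemmas \ref{Lemma:SumIncrease} and \ref{Lemma:SumDecrease} are stated in terms of the \emph{left-hand} weighted output sum, whereas the perturbation here acts primarily on the \emph{right-hand} value, DMU $\ih$'s own output, and the coupling term $\un\lambda_{\ih}$ it simultaneously adds to the left-hand side is precisely what blocks a one-line monotonicity argument. The delicate bookkeeping is to confirm $\lambda_{\ih} < 1$ (or, more cleanly, to argue that one may take $\lambda_{\ih} = 0$ at optimum for the inefficient DMU $\ih$, so that the constraint reduces to a clean shift of the right-hand side by $\un$) and then to make the slack case distinction rigorous, including the boundary case where $\smm$ exactly absorbs the perturbation and the inequality is attained with equality. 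Once the sign of the induced change in $\UY_m \Llu$ is pinned down, the conclusions follow immediately from the two cited lemmas.
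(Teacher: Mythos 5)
Your proposal is correct and takes essentially the same route as the paper: the paper's entire proof is ``Analogous to Lemma \ref{Lemma:SumIncrease},'' and your argument is precisely that analogy made explicit --- write the perturbed output constraint, run the slack-absorption case analysis, and bridge to Lemmas \ref{Lemma:SumIncrease} and \ref{Lemma:SumDecrease} --- with the additional $\un\lambda_{\ih}$ bookkeeping (the perturbation appearing on both sides of DMU $\ih$'s own constraint) that the paper leaves implicit. The level of rigour in your two branches matches that of the paper's own proof of Lemma \ref{Lemma:SumIncrease}, so nothing further is needed.
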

\begin{proof}
Analogous to Lemma \ref{Lemma:SumIncrease}. 
\end{proof}
Therefore, from Lemmas \ref{Lemma:EffDMUInput}, \ref{Lemma:EffDMUOutput} and \ref{Lemma:IneffOut}, we conclude the following result.
\begin{result}\label{Result:Changes}
DMU $\ih$'s efficiency score will not decrease if any of the following occur: DMU $\ih$'s inputs decrease or outputs increase or an efficient DMU $r$'s inputs increase or outputs decrease.
\end{result}
The analysis of Lemmas \ref{Lemma:EffDMUInput}-\ref{Lemma:EffDMUOutput} can be repeated for multiple changes in inputs and outputs for the efficient and inefficient DMUs. Multiple changes in the data result in a maximum increase in DMU $\ih$'s efficiency score which is not smaller than that caused by a single change in the data. Consequently, we consider the changes in efficiency score if all the data of all DMUs change. From Result \ref{Result:Changes}, we only want to consider scenarios that increase DMU $\ih$'s efficiency score. Therefore, we only need to consider an increase(decrease) in efficient(inefficient) DMUs' inputs and a decrease(increase) in efficient(inefficient) DMUs' outputs.
\begin{proposition}\label{Prop:AllChange}
 The maximum increase in efficiency score for DMU $\ih$ when changing the data of all DMUs is 
 \begin{equation}
  { \max _{~q \in Q}~ \frac { \max _{\substack{i\neq \ih}} \NX_{qi}- \min _{\substack{i\neq \ih}} \NX_{qi}+2\un }{\NX_{q\ih}}} .
 \end{equation}
\end{proposition}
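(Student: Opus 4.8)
The plan is to reduce the all-DMU change to the single-change analysis already carried out and then sum the contributions. By Result \ref{Result:Changes} and Theorem \ref{Thm:MaxIncUFix}, the only changes that can increase $E^{\ih}$ are decreasing DMU $\ih$'s inputs, increasing its outputs, increasing the efficient DMUs' inputs, and decreasing their outputs; so I would fix the uncertainty to exactly this configuration. Since Proposition \ref{Prop:BindingInput} guarantees that efficiency is governed by the binding input constraints, I would work with a fixed binding constraint $q \in Q$ and track the ratio $\frac{\NX_q \Lll}{\NX_{q\ih}}$, which equals $E^{\ih}$ once the slack $\snn$ vanishes. The target is to show that the maximum attainable increase of this ratio, over the admissible uncertainty, is $\frac{\max_{i\neq\ih}\NX_{qi} - \min_{i\neq\ih}\NX_{qi} + 2\un}{\NX_{q\ih}}$, and that the overall efficiency increase is obtained by maximising over $q \in Q$.

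Next I would decompose the increase at constraint $q$ into an input-driven and an output-driven part. For the input part, I would invoke Lemma \ref{Lemma:EffDMUInput} twice: once for the reduction of DMU $\ih$'s input $q$ by $\un$, which acts on the denominator, and once for the increase by $\un$ of the input $q$ of the efficient DMU carrying the weight in the projection, which acts on the numerator; each move contributes an increase of $\frac{\un}{\NX_{q\ih}}$, giving $\frac{2\un}{\NX_{q\ih}}$ in total. For the output part, I would invoke Lemma \ref{Lemma:EffDMUOutput} together with Lemmas \ref{Lemma:SumIncrease}, \ref{Lemma:SumDecrease} and \ref{Lemma:IneffOut}: decreasing the efficient DMUs' outputs and increasing DMU $\ih$'s output shifts the weighted input sum $\NX_q \Lll$ from the efficient DMU with the smallest value of input $q$ to the one with the largest, producing the increase $\frac{\max_{i\neq\ih}\NX_{qi} - \min_{i\neq\ih}\NX_{qi}}{\NX_{q\ih}}$.

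I would then argue that these two parts combine additively and are simultaneously attainable by the single configuration of Theorem \ref{Thm:MaxIncUFix}, so that the increase at constraint $q$ is exactly $\frac{\max_{i\neq\ih}\NX_{qi} - \min_{i\neq\ih}\NX_{qi} + 2\un}{\NX_{q\ih}}$. This uses the principle, already established for multiple changes, that the maximum increase caused by several simultaneous changes is no smaller than that caused by any single change, so the separately computed maxima add rather than compete. Finally, because $E^{\ih}$ equals the largest of the binding ratios over $q \in Q$, the largest increase is realised on the constraint $q$ that can be pushed the highest, which yields the claimed maximisation over $q \in Q$.

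The hard part will be the additivity and compatibility step: I must check that the output-driven shift, which wants to place the projection weight on the efficient DMU attaining $\max_{i\neq\ih}\NX_{qi}$, is consistent with the input-driven increase being applied to that same weight-carrying DMU, so that the numerator genuinely gains both the range term and the extra $\un$ without the two effects cancelling. I would also need to confirm that after these changes constraint $q$ remains the binding (maximising) input constraint, or else pass to whichever constraint in $Q$ becomes binding; this is precisely what the outer $\max_{q \in Q}$ absorbs. Establishing that the separate bounds are tight and jointly achievable, rather than merely summable as upper bounds, is the crux of the argument.
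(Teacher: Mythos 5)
Your strategy matches the paper's at its endpoints (fix the favourable uncertainty direction via Result \ref{Result:Changes}, work on a binding input constraint $q$, finish with $\max_{q\in Q}$), but the middle step --- summing the single-change maxima from Lemmas \ref{Lemma:EffDMUInput} and \ref{Lemma:EffDMUOutput} --- is a genuine gap, and it is exactly the one you flag yourself. The principle you cite, that simultaneous changes produce an increase no smaller than any single change, only bounds the combined effect from below; it says nothing about whether the combined increase could exceed (or fail to reach) the sum of the individual maxima, so it cannot deliver the additivity your argument needs. There is a real interaction to worry about: the gain from reducing DMU $\ih$'s input scales with the robust efficiency score $\Eih$ after all the other changes have acted, not with the nominal score, so the "input part" and the "output part" are not independent. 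A further problem is circularity: you invoke Theorem \ref{Thm:MaxIncUFix} to justify the configuration, but Proposition \ref{Prop:AllChange} is the final step of that theorem's proof, so only Result \ref{Result:Changes} is available at this point.

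The paper dissolves the additivity issue with a single identity rather than a decomposition. Under the chosen configuration, constraint $q$ is binding in both the nominal and the robust problems, so
\begin{equation}
(\NX_{q}+\un)\bar{\Lambda}-(\NX_{q\ih}-\un)\Eih \;=\; 0 \;=\; \NX_{q}\lii-\NX_{q\ih}E^{\ih},
\end{equation}
which, using $e^{T}\bar{\Lambda}=1$, rearranges to
\begin{equation}
\Eih-E^{\ih}=\frac{\NX_{q}\bar{\Lambda}-\NX_{q}\lii+\un\left(1+\Eih\right)}{\NX_{q\ih}}.
\end{equation}
All effects now sit additively in one numerator, so no separate joint-attainability argument is needed; the interaction you were worried about appears explicitly as the term $\un\Eih$ and is capped at $\un$ by feasibility, since $\Eih\le 1$. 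Bounding $\NX_{q}\bar{\Lambda}\le\max_{i\neq\ih}\NX_{qi}$ and $\NX_{q}\lii\ge\min_{i\neq\ih}\NX_{qi}$ then gives the stated quantity, with the nominal $\NX_{q\ih}$ in the denominator, and maximising over $q\in Q$ finishes. To repair your proof you would replace the "sum of single-change bounds" step by this identity (or supply an equivalent argument that the separate maxima are simultaneously achieved and exhaust the combined effect), which is precisely the substance of the paper's proof.
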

\begin{proof}
From Result \ref{Result:Changes}, changing the outputs by $\Dym=-\un,~\Dyk=\un,~i \neq \ih$, results in the efficiency score increasing or remaining the same. When the inputs change, the largest increase in efficiency score occurs when $\Dxn=\un,~\Dxk=-\un,~i \neq \ih$. Choose a binding input constraint, $q$. At an optimal solution to \eqref{Eq:RobustBoxDEA} for DMU $\ih$, the introduction of uncertainty means the LHS of the $q$\textsuperscript{th} input constraint increases by $\un(1+\Eih)$ compared to the $q$\textsuperscript{th} input constraint in \eqref{Eq:Nominal}, i.e.
\begin{subequations}
\label{Eq:ChangeBindALL}
\addtocounter{parentequation}{-1}
\begin{align}
 (\NX_{q}+\un )\bar{\Lambda}- (\NX_{q\ih}-\un)\Ei = \NX_{q}\lii-\NX_{q\ih} E^{\ih}  \label{Eq:ChangeBind1} \\
 \Eih-E^{\ih}=\frac { \NX_q \bar{\Lambda}- \NX_q\lii +\un(1 +\Eih) }{\NX_{q\ih}}.\label{Eq:ChangeBind2}
 \end{align} 
\end{subequations}
The maximum change in efficiency score occurs when $ \NX_q \bar{\Lambda}$ and $\Eih$ are as large as possible and $ \NX_{q}\lii $ is as small as possible, and this gives our result.
\end{proof}
This completes the proof of Theorem \ref{Thm:MaxIncUFix} and gives the following result.
\begin{result}\label{Result:OppChanges}
There is always an optimal solution to \eqref{Eq:RobustBoxDEA} such that $\Eih \geq E^{\ih}$ in which DMU $\ih$ benefits from increased outputs and decreased inputs and the efficient DMUs' inputs increase and outputs decrease. 
\end{result}

\subsection{DEA distance}\label{SubSec:DEADistance}
Solving \eqref{Eq:uDEA} determines the smallest amount of uncertainty required for DMU $\ih$ to be deemed efficient. To aid solving \eqref{Eq:uDEA}, we define the DEA distance for DMU $\ih$ to measure how far DMU $\ih$ is from the efficient frontier.

We are considering an input oriented DEA model and hence, the inefficient DMUs are projected to the efficient frontier while their outputs remain fixed. 
\begin{definition}\label{Def:DEADist}
 The DEA distance from DMU $\ih$ to its target point on the hyperplane, $\NM \in \Psi $ is 
 \begin{equation}\label{Eq:DEADistance}
  \mathfrak{D}(\ih,\NM) = \frac{|\Ax \NX_{1\ih}+\dots + \Axn \NX_{N\ih}+\By \NY_{1\ih}+\dots +\Bym \NY_{M\ih}-d|}{\sqrt{\Ax ^2+\dots +\Axn ^2}}.
 \end{equation}
\end{definition}
Definition \ref{Def:DEADist} can be derived in the following manner. DMU $\ih$ is inefficient and hence, does not lie on the hyperplane $\NM \in \Psi$. Consider the $N-$dimensional hyperplane $\Nn$, where the outputs are fixed to be the value they take for DMU $\ih$. 
\begin{equation}\label{Eq:PlaneN-1}
  \Nn= \{r \in \RR^N:r^T \eta_2=d-\By \NY_{1\ih}-\dots -\Bym \NY_{M\ih}\},
\end{equation}
where $\eta_2=(\Ax, \dots, \Axn)^T$. Then the DEA distance from DMU $\ih$ to it's projection on $\NM$ is found by calculating the Euclidean distance from DMU $\ih$ to the plane $\Nn$. 
\begin{proposition}
 \label{Prop:DEADist}
$\mathfrak{D}(i,\NM),$ is the Euclidean distance from DMU $i$ to $T(i,\NM)$ given by \eqref{Eq:DEADistance}.
\end{proposition}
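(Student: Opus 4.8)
The plan is to reduce the whole computation to the elementary point-to-hyperplane distance formula in the $N$-dimensional input space, using the input orientation of the model to explain why only the input components of the normal vector survive in the denominator. Since the model is input oriented, the projection defining the target point (Definition \ref{Def:Target}) holds the outputs of DMU $i$ fixed and moves only along the input coordinates. Consequently $T(i,\NM)$ is the point of $\NM$ whose output coordinates agree with $\NY_{1i},\dots,\NY_{Mi}$ and which is closest, in Euclidean norm, to DMU $i$ when both points are viewed in $\RR^N$; that is, $T(i,\NM)$ is the foot of the perpendicular from the input vector of DMU $i$ onto the fixed-output slice of $\NM$.

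First I would identify this fixed-output slice explicitly. Starting from the defining equation $r^T\eta_1 = d$ of $\NM$ in \eqref{Eq:PlaneN+M}, with $\eta_1=(\Ax,\dots,\Axn,\By,\dots,\Bym)^T$, I substitute the fixed output values $\NY_{1i},\dots,\NY_{Mi}$ for the output coordinates of $r$. This moves the terms $\By \NY_{1i}+\dots+\Bym \NY_{Mi}$ to the right-hand side and leaves a constraint on the input coordinates alone, with normal vector $\eta_2=(\Ax,\dots,\Axn)^T$ and right-hand side $d-\By \NY_{1i}-\dots-\Bym \NY_{Mi}$. This is exactly the hyperplane $\Nn$ of \eqref{Eq:PlaneN-1}, so the set of admissible target points coincides with $\Nn$ inside $\RR^N$.

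Next I would apply the standard fact that the Euclidean distance from a point $p\in\RR^N$ to a hyperplane $\{r\in\RR^N : r^T a = b\}$ equals $|p^T a - b|/\|a\|_2$, with the minimizing point being the orthogonal projection of $p$. Taking $p=(\NX_{1i},\dots,\NX_{Ni})^T$, $a=\eta_2$, and $b=d-\By \NY_{1i}-\dots-\Bym \NY_{Mi}$, and then carrying the output terms back into the numerator, reproduces precisely the expression in \eqref{Eq:DEADistance}: the numerator is $|\Ax \NX_{1i}+\dots+\Axn \NX_{Ni}+\By \NY_{1i}+\dots+\Bym \NY_{Mi}-d|$ and the denominator is $\sqrt{\Ax^2+\dots+\Axn^2}$. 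Identifying this minimizing point with $T(i,\NM)$ then gives that $\mathfrak{D}(i,\NM)$ is indeed the Euclidean distance from DMU $i$ to its target point.

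The one point I would take care to justify, and which I expect to be the main obstacle, is that the distance is correctly measured \emph{inside the input subspace} rather than in the full $(N+M)$-dimensional ambient space. This is precisely what forces the output coefficients $\By,\dots,\Bym$ to enter only through the fixed output values in the numerator and to be absent from the normalizing denominator; applying the point-to-hyperplane formula to $\NM$ in $\RR^{\Phi}$ directly would instead produce $\sqrt{\Ax^2+\dots+\Axn^2+\By^2+\dots+\Bym^2}$, which is not what \eqref{Eq:DEADistance} asserts. Once the reduction to $\Nn$ in $\RR^N$ is established from the input orientation, the remainder is a routine substitution.
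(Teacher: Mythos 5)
Your proposal is correct and follows essentially the same route as the paper: the paper derives \eqref{Eq:DEADistance} by fixing the outputs of DMU $\ih$ to obtain the $N$-dimensional hyperplane $\Nn$ in \eqref{Eq:PlaneN-1} and then computing the Euclidean point-to-hyperplane distance in the input space, which is exactly your reduction. Your write-up is in fact somewhat more explicit than the paper's (which states the derivation informally before the proposition rather than in a proof environment), particularly in justifying why the denominator contains only $\sqrt{\Ax^2+\dots+\Axn^2}$ rather than the full norm of $\eta_1$.
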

\begin{definition}\label{Def:DEADistmin}
 \[\mathfrak{D}(i):=\min_{\NM \in \Psi}\mathfrak{D}(i,\NM).\]  
 \end{definition}
$\mathfrak{D}(i)$ is the minimum DEA distance from DMU $i$ to all hyperplanes $\NM \in \Psi$. It follows from Definition \ref{Def:DEADistmin} that DMU $i$ is efficient if and only if $\mathfrak{D}(i)=0$. In the nominal DEA problem \eqref{Eq:Nominal}, the hyperplane $\psi$ that DMU $i$ is projected to, will always give the minimum DEA distance $\mathfrak{D}(i)$.
\begin{definition}
 $\minU$ is the minimum amount of uncertainty required for DMU $\ih$ to be deemed efficient.
\end{definition}
From Definitions \ref{Def:VirtualDMU}-\ref{Def:DEADistmin} we can compute $\minU$, the minimum amount of uncertainty such that $\mathfrak{D}(\ih^u)=0$. We note that for any DMU $i$ that is efficient in the nominal DEA problem $\minU=0$.
\subsection{Changing uncertainty}\label{SubSec:ChangeU}
We now consider the case where we have a changing value of $\un$ to determine the minimum amount of uncertainty, $\minU$, required such that DMU $\ih$ becomes efficient. \citet{ehrgott2018uncertain} prove that as the amount of uncertainty increases the efficiency score will increase. In the case of box uncertainty this is intuitive, our collection of uncertainty are boxes, so an increase in $\map(\U)$ means the box size $\sigma$ increases.
\begin{corollary} \label{Cor:InEff}
For DMU $\ih$, with box uncertainty, increasing the value of $\un$ increases the efficiency score. 
\end{corollary}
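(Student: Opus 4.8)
The plan is to reduce the statement to the monotonicity of the uncertainty sets in $\un$ and then invoke the favourable-perturbation results already established. First I would note that for box uncertainty (Definition \ref{Def:BoxUncertainty}) the sets are nested in the radius: if $\sigma_1 \le \sigma_2$, then for every $j \in \mathcal{J}$ the box of radius $\sigma_1$ centred at the nominal data is contained in the box of radius $\sigma_2$, so $\U_j(\sigma_1) \subseteq \U_j(\sigma_2)$. Hence the smaller collection harbours no more uncertainty than the larger, $\U(\sigma_1) \unlhd \U(\sigma_2)$ in the sense of Definition \ref{def-harbor}, consistent with $\map(\U(\sigma_1)) = \sigma_1 \le \sigma_2 = \map(\U(\sigma_2))$.

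Second I would turn this inclusion into a statement about $\Eih$. Because solving \eqref{Eq:RobustBoxDEA} selects the realisation inside the box that is most favourable to DMU $\ih$ (Result \ref{Result:OppChanges}), every perturbation pair $(\Delta,\nabla)$ admissible at radius $\sigma_1$ is also admissible at radius $\sigma_2$; the minimisation defining the attainable $\Ti$ is therefore carried out over a superset, so the efficiency score is non-decreasing in $\un$. To obtain the strict increase claimed, I would make the perturbation explicit: Theorem \ref{Thm:MaxIncUFix} fixes the optimal signs independently of $\un$ (efficient DMUs' inputs raised and outputs lowered by $\un$, DMU $\ih$'s inputs lowered and outputs raised by $\un$), and substituting these into Proposition \ref{Prop:AllChange} writes the attained gain as $\max_{q \in Q}\bigl(\max_{i\neq\ih}\NX_{qi} - \min_{i\neq\ih}\NX_{qi} + 2\un\bigr)/\NX_{q\ih}$, with $Q$ the fixed set of binding input constraints and $\NX_{q\ih} > 0$. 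A maximum over the fixed index set $Q$ of functions each strictly increasing in $\un$ (through the $+2\un$ term) is itself strictly increasing, which gives the corollary.

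The main obstacle I anticipate is pinning down the direction of monotonicity. Read naively as a worst-case (``for all realisations'') model, enlarging the uncertainty set would shrink the robust feasible region and could lower the score; the step to be careful about is therefore the optimistic reading of the uDEA model, in which $(\Delta,\nabla)$ are chosen to benefit DMU $\ih$, as codified in Result \ref{Result:OppChanges} and Theorem \ref{Thm:MaxIncUFix}. Once that is in place the set-nesting argument delivers non-decrease and the explicit formula delivers the strict increase. The only caveat I would flag is saturation: the increase is strict while DMU $\ih$ is inefficient, but $\Eih$ cannot exceed $1$, so once efficiency is reached further growth of $\un$ leaves the score at its maximal value $1$.
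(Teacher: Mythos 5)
The decisive step of your proof fails. The corollary asserts an increase, and you obtain strictness by reading Proposition \ref{Prop:AllChange} as giving ``the attained gain''. It does not: its formula is an upper bound on the gain, produced in its proof from \eqref{Eq:ChangeBind2} by replacing $\NX_q\bar{\Lambda}$ with $\max_{i\neq\ih}\NX_{qi}$, replacing $\NX_q\lii$ with $\min_{i\neq\ih}\NX_{qi}$, and replacing $\Eih$ with its cap $1$ (so that $\un(1+\Eih)$ becomes $2\un$). For a given data set none of these substitutions need be tight, so knowing that this bound is strictly increasing in $\un$ tells you nothing about whether $\Eih$ itself strictly increases; a quantity can stay constant while a valid upper bound on it grows. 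The paper closes exactly this hole by comparing the two actual optimal solutions: it sets $\bar{\un}=\un+\epsilon$, picks an input constraint $q$ that is binding at the optimum for $\bar{\un}$, argues that the weighted input sum $(\NX_q+\un)\hat{\Lambda}$ cannot decrease because that constraint is binding (it is already as small as possible), and hence that the entire perturbation $\epsilon(1+\bar{\mathcal{E}}^{\ih}(\mathcal{U}))$ of the constraint must be absorbed by the $\theta$-term, yielding the explicit positive increment \eqref{EQ}, namely $\bar{\mathcal{E}}^{\ih}(\mathcal{U})-\mathcal{E}^{\ih}(\mathcal{U})=\epsilon(1+\bar{\mathcal{E}}^{\ih}(\mathcal{U}))/(\NX_{q\ih}-\un)>0$. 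A repaired version of your argument needs some such comparison of optimal solutions at the two radii; citing the bound of Proposition \ref{Prop:AllChange} cannot deliver strictness.

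A secondary problem: your monotonicity mechanism is stated backwards in two places, even though you land on the correct direction. If the perturbations $(\Delta,\nabla)$ were decision variables inside the minimisation defining $\Eih$, then enlarging the box would mean minimising $\Ti$ over a \emph{superset} of feasible points, which can only \emph{decrease} the optimal value --- the opposite of what you claim. The correct mechanisms are either (i) the robust reading of \eqref{Eq:Robust}, in which the constraints must hold for every realisation, so a larger box adds constraints, shrinks the feasible set in $(\lambda,\Ti)$, and raises the minimum, or (ii) the reading in which the favourable realisation of Theorem \ref{Thm:MaxIncUFix} is chosen by an \emph{outer maximisation} over $(\Delta,\nabla)$, and a maximisation over a superset is non-decreasing. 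For the same reason the ``obstacle'' you flag --- that the worst-case reading ``could lower the score'' --- is not a real obstacle: shrinking the feasible region of a minimisation problem can never lower its optimal value. Your saturation caveat, on the other hand, is sound, and in fact applies equally to the paper's own statement of the corollary.
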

\begin{proof}
Consider an increase in $\un$ such that $\bar{\un}=\un+\epsilon$. Let $\left( \hat{\Lambda}, \mathcal{E}^{\ih}(\mathcal{U})\right)$ be an optimal solution to \eqref{Eq:RobustBoxDEA} when $\map(\U)=\un$ and $\left( \bar{\Lambda},\bar{ \mathcal{E}}^{\ih}(\mathcal{U})\right)$ be an optimal solution when $\map(\U)=\bar{\un}$. When $\map(\U)=\bar{\un}$, select a binding input constraint $q$ in \eqref{Eq:RobustBoxDEA}. At an optimal solution $\left( \bar{\Lambda},\bar{ \mathcal{E}}^{\ih}(\mathcal{U})\right),$ the LHS of the $q$\textsuperscript{th} input constraint increases by $\epsilon(1+\bar{ \mathcal{E}}^{\ih}(\mathcal{U}))$ compared to the corresponding constraint in \eqref{Eq:RobustBoxDEA} with $\map(\U)=\un$. Consequently, $ (\NX_{q}+\un)\hat{\Lambda}>(\NX_{q}+\un) \bar{\Lambda} $ or $\mathcal{E}^{\ih}(\mathcal{U})( \NX_{q\ih}-\un)<\bar{ \mathcal{E}}^{\ih}(\mathcal{U})( \NX_{q\ih}-\un)$. However, $ (\NX_{q}+\un)\hat{\Lambda}$ cannot decrease because the constraint for input $q$ is binding. $(\NX_{q}+\un)\hat{\Lambda}$ is already as small as possible and $ (\NX_{q}+\un)\hat{\Lambda}=(\NX_{q}+\un) \bar{\Lambda} $. Then the introduction of $ \bar{\un}=\un+\epsilon$ results in
\begin{equation}\label{EQ}
    \bar{ \mathcal{E}}^{\ih}(\mathcal{U}) -\mathcal{E}^{\ih}(\mathcal{U}) =\frac{\epsilon(1+\bar{ \mathcal{E}}^{\ih}(\mathcal{U}))}{\NX_{q\ih}-\un}.
\end{equation}
The RHS of \eqref{EQ} is greater than zero therefore, $\bar{ \mathcal{E}}^{\ih}(\mathcal{U}) -\mathcal{E}^{\ih}(\mathcal{U})>0$ and the efficiency score is increasing. 
\end{proof}

In this way, $E^{\ih}$, the efficiency score when there is no uncertainty, provides a lower bound for DMU $\ih$'s efficiency score when uncertainty is present.
\begin{theorem}\label{Thm:ValueOfU}
For DMU $\ih$, the minimum amount of uncertainty required such that uncertain DMU $\ih^u$ is at the target point $T(\ih^u,\NMu)$ on $\NMu$ is
\begin{equation}\label{Eq:ValueOfUALL}
    \un=\frac{|\Ax \NX_{1\ih}+\dots + \Axn \NX_{N\ih}+\By \NY_{1\ih}+\dots +\Bym \NY_{M\ih}-d|}{2|-\Ax-\dots -\Axn+ \By+ \dots +\Bym|} . 
\end{equation}
\end{theorem}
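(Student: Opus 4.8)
The plan is to reduce the statement to a single linear equation in $\un$ expressing that the virtual DMU $\ih^u$ lies on the translated facet $\NMu$, and then to solve it. The factor of $2$ in the denominator of \eqref{Eq:ValueOfUALL} will emerge naturally, because both the facet and the virtual DMU move toward one another as $\un$ grows.

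First I would write down the equation of the translated hyperplane. By the translation rule $(\Nx,\Ny)\in\NM \mapsto (\Nx+\un,\Ny-\un)\in\NMu$, a point $q$ lies on $\NMu$ exactly when $q$ minus the shift vector $(\un,\dots,\un,-\un,\dots,-\un)$ lies on $\NM$. Substituting into $r^T\eta_1=d$ with $\eta_1=(\Ax,\dots,\Axn,\By,\dots,\Bym)^T$ would give
\begin{equation}
\NMu=\Big\{q: q^T\eta_1 = d+\un\big(\Ax+\dots+\Axn-\By-\dots-\Bym\big)\Big\}.
\end{equation}

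Next I would impose the target-point condition. By Definition \ref{Def:Target}, asking that $\ih^u$ be at $T(\ih^u,\NMu)$ is the same as asking that the virtual DMU lie on $\NMu$; and by Definition \ref{Def:VirtualDMU} that DMU has coordinates $(x^{\ih}-\un,y^{\ih}+\un)$. Evaluating $\eta_1$ against it yields
\begin{equation}
\Ax \NX_{1\ih}+\dots+\Axn \NX_{N\ih}+\By \NY_{1\ih}+\dots+\Bym \NY_{M\ih} - \un\big(\Ax+\dots+\Axn-\By-\dots-\Bym\big),
\end{equation}
and setting this equal to the level $d+\un(\Ax+\dots+\Axn-\By-\dots-\Bym)$ of $\NMu$ collects two copies of the sum $\Ax+\dots+\Axn-\By-\dots-\Bym$ on one side. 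Solving the resulting linear equation for $\un$ delivers \eqref{Eq:ValueOfUALL} after taking absolute values, the overall sign inside the denominator's modulus being immaterial.

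Finally I would confirm minimality. The coincidence equation is linear in $\un$, so there is a unique uncertainty level at which $\ih^u$ reaches $\NMu$; for smaller $\un$ the virtual DMU sits strictly on the inefficient side, and by Corollary \ref{Cor:InEff} the efficiency score is strictly increasing in $\un$, so no smaller value places $\ih^u$ on the facet. The main obstacle is purely bookkeeping with signs: the facet shifts by $+\un$ in the inputs and $-\un$ in the outputs while the virtual DMU does the reverse, so the two displacements reinforce rather than cancel, producing the factor $2$. Once these signs are tracked consistently the remainder is routine algebra.
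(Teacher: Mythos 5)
Your proposal is correct and takes essentially the same route as the paper: the paper likewise forms the translated hyperplane $\NMu$ with right-hand side $d'=d+\un(\Ax+\dots+\Axn-\By-\dots-\Bym)$, evaluates the virtual DMU $(x^{\ih}-\un,\,y^{\ih}+\un)$ against it, and solves the resulting linear equation in $\un$, with the factor $2$ arising exactly as you describe from the point and the facet moving toward each other. The only cosmetic difference is that the paper phrases the incidence condition as $\mathfrak{D}(\ih^u,\NMu)=0$ via the DEA-distance formula (whose denominator cancels anyway) rather than substituting into the plane equation directly, and your closing minimality remark via Corollary \ref{Cor:InEff} is a harmless supplement to what the paper leaves implicit.
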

\begin{proof}
The DEA distance from the point $\ih$ to the hyperplane $\NM$ is given by \eqref{Eq:DEADistance}. When uncertainty is introduced, we consider the DEA distance from the virtual DMU $\ih^u$ to a plane $\NMu$. $\NMu$ is parallel to $\NM$ and given by the equation
\begin{equation}\label{Eq:PlaneNMu}
  \NMu=\left\{r\in \RR^\Phi :r^T \eta_1=d'\right \},
\end{equation}
where $d'=d +\un(\Ax+\Axx + \cdots +\Axn -\By -\Byy - \cdots - \Bym)$. Consider the $N-$dimensional hyperplane $\Nnu$, where the outputs are fixed to be the value they take for virtual DMU $\ih^u$,
\begin{equation}\label{Eq:PlaneN-1u}
  \Nnu =\left\{r\in\RR^N:r^T \eta_2=d'-\By \NY_{1\ih}-\dots -\Bym \NY_{M\ih}\right\}.
\end{equation}
The DEA distance from DMU $\ih^u$ to the hyperplane $\NMu$ is found by projecting DMU $\ih^u$ to $\NMu$ along $\Nnu$, 
\begin{equation}
\begin{aligned}
\mathfrak{D}(\ih^u,\NMu)&=\frac{|\Ax (\NX_{1\ih}-\un)+\dots + \Axn (\NX_{N\ih}-\un)+\By (\NY_{1\ih}+\un) +\dots +\Bym (\NY_{M\ih}+\un)-d|}{\sqrt{\Ax ^2+\dots +\Axn ^2}}  \\
&=\mathfrak{D}(\ih,\NM)+\frac{|2\un (-\Ax-\dots -\Axn+ \By+ \dots +\Bym)|}{\sqrt{\Ax ^2+\dots +\Axn ^2}}.\label{Eq:d(k,n-1)u} 
\end{aligned}
\end{equation}
Therefore, by introducing an uncertainty of $\un$, the DEA distance reduces by at most $\frac{|2\un (-\Ax-\dots -\Axn+ \By+ \dots +\Bym)|}{\sqrt{\Ax ^2+\dots +\Axn ^2}}$.
For DMU $\ih^u$ to be on the facet of the efficient frontier defined by $\NMu$, we require uncertainty such that the DEA distance from the point $\ih^u$ to the hyperplane $\NMu$ is 0, i.e. $\mathfrak{D}(\ih^u,\NMu)=0$. Substituting $\mathfrak{D}(\ih^u,\NMu)=0$ into \eqref{Eq:d(k,n-1)u} and rearranging to make $\un$ the subject gives \eqref{Eq:ValueOfUALL}. 
\end{proof}
Theorem \ref{Thm:ValueOfU} allows us to calculate the amount of uncertainty required for virtual DMU $\ih^u$ to be deemed efficient on $\NMu \in \Psi^u$. This is done by projecting virtual DMU $\ih^u$ to a point $T(\ih^u,\NMu)$ on $\NMu \in \Psi^u$. From Theorem \ref{Thm:ValueOfU}, we can solve \eqref{Eq:uDEA} in the following way. First compute the minimum amount of uncertainty for DMU $\ih^u$ to be projected to $T(\ih^u,\NMu)$ for all $\NMu \in \Psi^u$, \eqref{Eq:ValueOfUALL}, then select the minimum of these, $\mathfrak{D}(\ih^u)$. This gives $\minU$, the minimum amount of uncertainty for DMU $\ih$. 

Finding all $\NM \in \Psi$ for large $M,~N$ and/or $I$ can be computationally intensive. However, when $N=M=1$, \eqref{Eq:ValueOfUALL} from Theorem \ref{Thm:ValueOfU} can be simplified and hence $\upsilon_{i}^*$ can be calculated easily. Consider a DEA problem \eqref{Eq:Nominal} with three DMUs $A$, $B$, $C$ and $M=N=1$. Let DMU $C$ be inefficient with peers $A$ and $B$ and let $g=\frac{y^{B}-y^{A}}{x^{B}-x^{A}}$. Then, from Theorem \ref{Thm:ValueOfU} with $M=N=1$, the minimum amount of uncertainty required to be projected to the target point $T(C^u,\Pi^u_{AB})$ on $\Pi_{AB}^u$ defined by the inputs and outputs of DMU $A$ and $B$ is 
\begin{equation} \label{Eq:ValueOfU2D}
\un=\frac{ g(x^{C}-x^{A})-y^{C}+y^{A}}{2(1+ g)}.
\end{equation}
Once the minimum amount of uncertainty required for DMU $\ih^u$ to be projected to all $\Pi ^u \in \Psi^u$ has been calculated, $\minU$ can be found by selecting the minimum of these. We demonstrate this for $M=N=1$ in Example \ref{Ex:DEADist2D}.

\begin{example}{\textbf{DEA Distance.}}\label{Ex:DEADist2D}
Consider the six DMUs pictured in Figure \ref{Fig:DEADist2D} whose nominal data are listed in Table \ref{Tab:2DExample}. In Figure \ref{Fig:DEADist2D}, the efficient frontier is shown by the red line sections between the efficient DMUs $A-D$. By extending the line sections going through pairs of efficient DMUs' data (and the horizontal and vertical lines through the DMUs with the largest(smallest) $\Ny(\Nx)$ value), we can show the hyperplanes, (here lines) that define the PPS. The efficient frontier has five lines that intersect with the PPS as shown in Figure \ref{Fig:DEADist2D}. This means $\Psi=\{ \Pi_A,\Pi_{AB},\Pi_{BC},\Pi_{CD},\Pi_{D}\}$. Where $\Pi_{ij}$ is the line going through the points defined by DMU $i,j\in\{A,B,C,D\}$, $\Pi_A$ is the vertical line $x=x^{A}$ and $\Pi_D$ is the horizontal line $y=y^{D}$. Here we include $\Pi_D \in \Psi$ even though it only intersects the efficient frontier at the point $D$. This will be discussed further when we calculate the minimum DEA distances in Table \ref{Tab:DEADist2DUncertain}.
\begin{table}[bt]
\centering
\begin{tabular}{llllll}
\hline
\textbf{DMU} &\textbf{In} & \textbf{Out} & $\mathbf{E^{\ih}}$&\textbf{Uncertain In} & \textbf{Uncertain Out} \\ \hline
A & 1 & 1 & 1 & 1+\un & 1-\un \\ 
B & 3 & 4 & 1 &3+\un & 4-\un\\ 
C & 7 & 7 & 1 & 7+\un & 7-\un \\ 
D & 10 & 8 & 1& 10+\un & 8-\un \\ 
E & 8 & 5 & 0.542 & 8-\un & 5+\un  \\ 
F & 6 & 2 & 0.278& 6-\un & 2+\un \\ \hline
\end{tabular}
\caption{Nominal and uncertain data, Example \ref{Ex:DEADist2D}.}
\label{Tab:2DExample}
\end{table}
\begin{figure}[bt]
\centering
\includegraphics[scale=0.8]{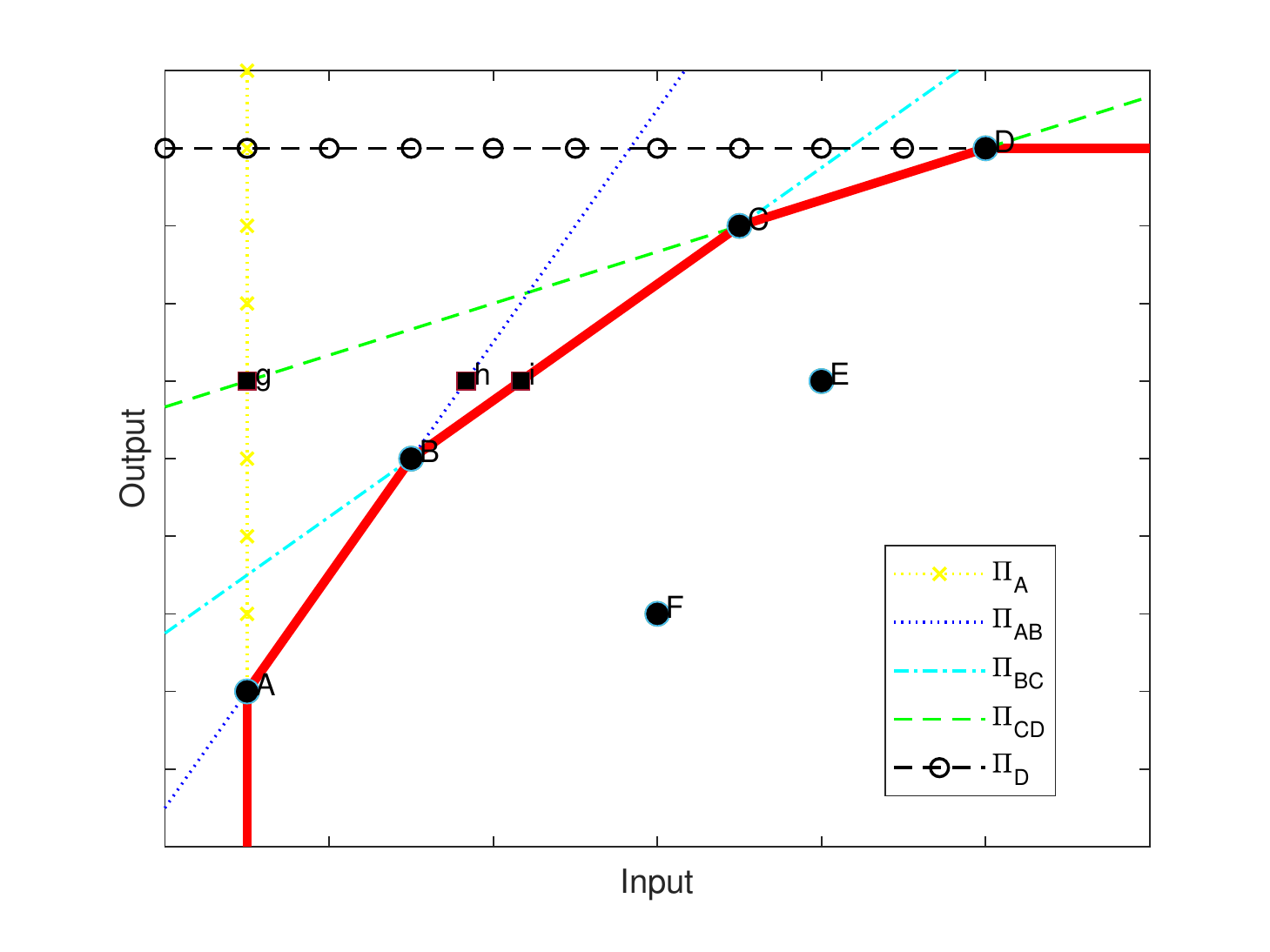}
\caption{The hyperplanes that define the PPS, Example \ref{Ex:DEADist2D}.}\label{Fig:DEADist2D}
\end{figure}
The DEA distances for the inefficient DMUs to their target points on each line in $\Psi$ are given in Table \ref{Tab:DEADist2D}. The target points for DMU $E$ for each of the lines are shown in Figure \ref{Fig:DEADist2D} by the points $g, h$ and $i$. We note here that in the nominal DEA DMUs $E$ and $F$ can never be projected to the line $\Pi_D$.
\begin{table}[tb]
\centering
\begin{tabular}{lllll}
\hline
$\mathbf{\Pi \in \Psi}$ & $\mathbf{T(E,\Pi)}$ & $\mathbf{T(F,\Pi)}$ & \textbf{$\mathbf{\mathfrak{D}(E,\Pi)}$} & \textbf{$\mathbf{\mathfrak{D}(F,\Pi)}$} \\ \hline
$\mathbf{\Pi_A}$ & $g=$(1,5) & (1,2) & 7 & 5 \\ 
$\mathbf{\Pi_{AB}}$ & $h=(\frac{11}{3},5)$ & $(\frac{5}{3},2)$ & $\frac{13}{3}$ & $\frac{13}{3}$ \\ 
$\mathbf{\Pi_{BC}}$ & $i=(\frac{1}{3},5)$ & $(\frac{1}{3},2)$ & $\frac{11}{3}$ & $\frac{17}{3}$ \\ 
$\mathbf{\Pi_{CD}}$ & $g=$(1,5) & (-8,2) & 7 & 14 \\ 
$\mathbf{\Pi_{D}}$ & \multicolumn{1}{c}{-} & \multicolumn{1}{c}{-} & $\infty$ & $\infty$ \\ \hline
\end{tabular}
\caption{DEA Distance for DMUs $E$ and $F$, Example \ref{Ex:DEADist2D}.}
\label{Tab:DEADist2D}
\end{table}
Here $\mathfrak{D}(E)=3.66$ is obtained when DMU $E$ is projected to $\Pi_{BC}$ as DMU $E$ has peers DMU $B$ and $C$ in the nominal DEA. Similarly, $\mathfrak{D}(F)=4.33$ is obtained when DMU $F$ is projected to $\Pi_{AB}$ as DMU $F$ has peers DMU $A$ and $B$ in the nominal DEA. 

\begin{table}[htb]
\centering
\begin{tabular}{lllll}
\hline
$\mathbf{\NMu \in \Psi^u}$ & $\mathbf{\mathfrak{D}(E^u,\NMu)}$ & $\mathbf{\mathfrak{D}(F^u,\NMu)}$ & $\mathbf{\un_E}$ & $\mathbf{\un_F}$ \\ \hline
$\mathbf{\Pi_A^u}$ & 7-2\un & 5-2\un & 3.50 & 2.50 \\ 
$\mathbf{\Pi_{AB}^u}$ & $| \frac{13}{3}-\frac{10}{3}\un |$ & $|\frac{13}{3}-\frac{10}{3}\un |$ & 1.30 & 1.30 \\ 
$\mathbf{\Pi_{BC}^u}$ & $|\frac{11}{3}-\frac{14}{3}5\un |$ & $|\frac{17}{3}-\frac{14}{3}\un |$ & 0.79 & 1.21 \\ 
$\mathbf{\Pi_{CD}^u}$ & $|7-8\un |$& $|14-8\un |$& 0.88 & 1.75 \\ 
$\mathbf{\Pi_{D}^u}$ & \multicolumn{1}{c}{-} & \multicolumn{1}{c}{-}& $\un > 1.50$ & $\un > 3$ \\ \hline
\end{tabular}
\caption{DEA Distance and minimum amount of uncertainty for DMUs $E$ and $F$, Example \ref{Ex:DEADist2D}.}
\label{Tab:DEADist2DUncertain}
\end{table}
To calculate $\mathfrak{D}(\ih^u,\NMu),$ for inefficient DMUs $E^u$ and $F^u$, we transform the nominal data according to Definition \ref{Def:VirtualDMU}. This gives the uncertain data in Table \ref{Tab:2DExample}. The DEA distance of the transformed data can then be calculated, see Table \ref{Tab:DEADist2DUncertain}. As a result, we calculate the amount of $\un$ for each $\NMu \in \Psi^u$. To find the minimum amount of uncertainty required for DMUs $E$ and $F$ to become efficient we find $\un^*=\min_{\NMu \in \Psi^u} \{\un~s.t.~\mathfrak{D}(\ih,\NMu)=0\}$. 
These are shown in Table \ref{Tab:DEADist2DUncertain}. We note that we do not have DEA distances to $\Pi_D^u$ but we can still calculate a minimum amount of uncertainty. This is because we consider an input oriented DEA model. To calculate the amount of uncertainty that corresponds to this vertical projection for $\Pi_D^u$, define a virtual DMU $d^u$ such that $x^{d^u}=x^{D^u}+\kappa,~y^{d^u}=y^{D^u} +\epsilon$, where $\kappa$ is a positive constant such that $x^{D^u}+\kappa>\max\{(x^{i^u}: i^u \in\{A^u,B^u,C^u,D^u,E^u,F^u\}\}$. Then the line segment $D^ud^u$, has gradient, $g_{D^ud^u}=\frac{y^{D^u} +\epsilon-y^{D^u}}{x^{D^u}+\kappa-x^{D^u}}=\frac{\epsilon}{\kappa}$. $\un_E$ and $\un_F$ for $\Pi_D^u$ can then be calculated from \eqref{Eq:ValueOfU2D}, taking the limit as $\epsilon \to 0$ gives the result.

Therefore, the minimum amount of uncertainty required for DMUs $E$ and $F$ to become efficient are $\un_E=0.79$ and $\un_F=1.21$. Both first become efficient on the translated version of the line segment $BC$.
\end{example}

The number of possible facets of the efficient frontier increases exponentially as the size of the problem, $M+N$ and $I$, increases,  \citep{briec2003dual}. Therefore, as the problem size increases, explicitly calculating the amount of $\un$ for each hyperplane and then selecting the minimum to find $\minU$, as done in Example \ref{Ex:DEADist2D}, is not possible in polynomial time. There are methods to calculate all the facets, see for example, \citet{frei1999projections,briec2003dual,olesen2015facet} and \citet{ehrgott2019multiobjective}. This leads to the important question of when is it beneficial for an inefficient DMU to compare itself to different facets of the efficient frontier as uncertainty is introduced? 

When $N=M=1$, we can determine in advance to which hyperplane a DMU should be projected. This is because the facets are line segments and can be defined by two extreme points of the efficient frontier (or a single extreme point of $T$ for which an output(input) can be decreased(increased) without entering the interior of the PPS). By solving the nominal DEA problem \eqref{Eq:Nominal} $I$ times, all the efficient DMUs can be determined. There is a finite number, $\phi$, of efficient DMUs whose inputs and outputs represent extreme points of the PPS. These $\phi$ points can be ordered so that $x^1 < x^2< \dots < x^{\phi}$ and $y^1<y^2< \dots < y^{\phi}$ such that each consecutive pair defines a facet of the efficient frontier. This can be used to determine the facet a DMU should be compared to under uncertainty. This obviates the need to calculate the amount of uncertainty for each hyperplane. To do this, we first show in Proposition \ref{Prop:Gradient} that if two DMUs the same DEA distance from the efficient frontier are projected to different facets, the amount of uncertainty required for the facet with a bigger gradient will be greater.
\begin{proposition}\label{Prop:Gradient}
Let $N=M=1$ and assume there are three or more efficient extreme points of the PPS. Let DMU $\ih$ and $\ih'$ be inefficient DMUs a DEA distance $\tau$ from the efficient frontier that are projected to a hyperplane that contains the line segment defined by two extreme points of the PPS with gradients $g$ and $g'$ respectively. If  $g>g'$ then $E^{\ih}<E^{\ih'}$
\end{proposition}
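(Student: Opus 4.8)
The plan is to pass to the one-dimensional geometry of the frontier and to write both the DEA distance and the efficiency score in terms of the horizontal (fixed-output) gap to the projection facet. In the case $N=M=1$, Definition~\ref{Def:Target} and Proposition~\ref{Prop:DEADist} tell us that $T(\ih,\NM)$ is the horizontal projection of $\ih$ onto the facet line $\NM$ and that $\mathfrak{D}(\ih,\NM)$ is the Euclidean distance to it; since this projection keeps $y^{\ih}$ fixed, the distance is simply the horizontal gap $\tau=x^{\ih}-x_T$, where $x_T$ is the abscissa of the target at height $y^{\ih}$. Because the model is input oriented, the optimal scaling satisfies $E^{\ih}=x_T/x^{\ih}$, so $E^{\ih}=1-\tau/x^{\ih}$, and the same identity holds for $\ih'$ with the common value $\tau$. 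The map $x\mapsto 1-\tau/x$ is strictly increasing for fixed $\tau>0$ and positive data, so the proposition reduces to showing that $g>g'$ forces $x^{\ih}<x^{\ih'}$.

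For this I would exploit the concavity of the efficient frontier. The PPS $T$ is a convex polyhedron, so its upper boundary is the graph of a concave, increasing production function $f$; along the ordered extreme points $x^1<\dots<x^{\phi}$ the consecutive facet gradients are therefore strictly decreasing, $g_1>g_2>\dots>g_{\phi-1}$ (equal consecutive gradients would make the shared point non-extreme). Hence a steeper facet sits further to the left: if the projection facet of $\ih$ has gradient $g$ and that of $\ih'$ has gradient $g'$ with $g>g'$, then the entire $x$-range of the first facet lies weakly to the left of that of the second, and since each target point lies on its own facet we obtain $x_T\le x_T'$. This inequality is strict once we exclude the degenerate situation in which both DMUs project exactly onto a shared vertex (there $g$ and $g'$ would not single out a facet gradient at all), so under the standing assumption that each target lies in the relative interior of its facet we get $x_T<x_T'$.

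Combining the two steps, adding the common gap gives $x^{\ih}=x_T+\tau<x_T'+\tau=x^{\ih'}$, and the strict monotonicity of $1-\tau/x$ then delivers $E^{\ih}<E^{\ih'}$, which is the claim.

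The step I expect to be the crux is the middle one, namely converting ``steeper projection facet'' into ``smaller target abscissa''. Once the identity $E^{\ih}=1-\tau/x^{\ih}$ is established, the first and last steps are bookkeeping; the ordering argument, however, rests on the strict monotonicity of the facet gradients (which I would derive from concavity of $f$) together with the exclusion of the vertex-only projection, and I would state that genericity assumption explicitly so that every inequality remains strict.
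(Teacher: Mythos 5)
Your proof is correct for the proposition as literally stated, but it takes a genuinely different route from the paper's --- and, interestingly, it proves a different conclusion than the paper's own proof does. The paper parametrises the two DMUs relative to their peer facets, $\ih=(x^A+\rho+\tau,\,y^A+g\rho)$ and $\ih'=(x^B+\rho'+\tau,\,y^B+g'\rho')$, substitutes these into the closed-form minimum-uncertainty formula \eqref{Eq:ValueOfU2D} to obtain $\un_{\ih}=\frac{g\tau}{2(1+g)}$ and $\un_{\ih'}=\frac{g'\tau}{2(1+g')}$, and concludes from the monotonicity of $t\mapsto \frac{t}{1+t}$ that $\un_{\ih}>\un_{\ih'}$; it never returns to the inequality $E^{\ih}<E^{\ih'}$ appearing in the statement, which is exactly what you establish instead. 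Your route --- writing $E^{\ih}=1-\tau/x^{\ih}$ via the horizontal-gap identity for the DEA distance in the case $N=M=1$, and then deducing $x^{\ih}<x^{\ih'}$ from concavity of the frontier (strictly decreasing facet gradients, so the steeper facet lies entirely to the left) --- is more elementary, needs no appeal to \eqref{Eq:ValueOfU2D}, and has the merit of making explicit the genericity assumption (targets not both at a shared vertex) that the paper only encodes implicitly through $\rho,\rho'>0$; without that assumption the strict inequality can indeed fail. What your argument does not deliver is the quantitative link $\un=\frac{g\tau}{2(1+g)}$ between facet gradient, DEA distance and required uncertainty, which is the content the paper actually invokes afterwards (in the discussion that ``the required uncertainty \ldots decreases'' and in the proof of Theorem~\ref{Thm:Sections}, where DMUs are assigned to facets by comparing required uncertainties). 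So the two proofs are complementary rather than interchangeable: yours supports the proposition as written, the paper's supports its downstream use; a fully self-contained treatment would want both, or a restatement of the proposition in terms of $\un_{\ih}>\un_{\ih'}$.
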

\begin{proof}
Choose three DMUs $A,~B$ and $C$ that are consecutive extreme points of the PPS. Then the efficient frontier contains the line segments ${AB}$ and ${BC}$. We denote their gradients $g$ and $g'$, respectively, where $g>g'$. Consider the DMUs $\ih$ and $\ih'$ and their nominal DEA projections to the efficient frontier, points $D$ and $E$. 
\begin{equation}
\begin{aligned}
\ih&=(x^A+\rho+\tau,y^A+g\rho),~&\ih'=&(x^B+\rho'+\tau,y^B+g'\rho'), \\
D&=(x^A+\rho,y^A+g\rho), &E=&(x^B+\rho',y^B+g'\rho'),
\end{aligned}
\end{equation}
where $\rho,~\rho'$ and $\tau$ are positive constants. In this way DMUs $\ih$ and $\ih'$ have peers $A,~B$ and $B,C$, respectively and $\mathfrak{D}(\ih,AB)=\mathfrak{D}(\ih',BC)=\tau$. From \eqref{Eq:ValueOfU2D},
\begin{equation}
\label{Eq:uDuEALL}
 \un_{\ih}=\frac{g\tau}{2(1+g)} ,~~~
 \un_{\ih'}=\frac{g'\tau}{2(1+g')}.
\end{equation}
But, $g>g'$ so $g$ can be rewritten as $g=g'+\epsilon~(\epsilon>0)$ and $\un_{\ih}$ becomes
 \begin{equation}
 \un_{\ih}= \frac{\tau(g'+\epsilon)}{2(1+g'+\epsilon)}
 	 = \frac{\tau}{2} \left( \frac{g'+\epsilon}{1+g'+\epsilon} \right). 
 \end{equation}
Therefore, $\un_{\ih}>\un_{\ih'}$ and hence the amount of uncertainty required for a DMU to become efficient on a facet is greater when the gradient of the facet is bigger.
\end{proof}
Therefore, for inefficient DMUs a DEA distance $\tau$ from the efficient frontier, as the gradients of the line segments of the efficient frontier decrease, the required uncertainty for an inefficient DMU to become efficient when compared to that line segment decreases. This means the efficiency score of inefficient DMUs will not increase if compared to line segments of the efficient frontier with a larger gradient than the gradient of the line segment a DMU is projected to in the nominal DEA problem \eqref{Eq:Nominal}. Using the result from Proposition \ref{Prop:Gradient}, for $N=M=1$, we can now identify which line segment each point in the PPS is projected to and hence, calculate the minimum uncertainty required for a DMU to be deemed efficient. This is done in Theorem \ref{Thm:Sections}. 

\begin{theorem}\label{Thm:Sections}
Assume $N=M=1$ and there are $\phi>2$ extreme points of the PPS, ordered such that $x^1< \dots < x^{\phi}$ and $y^1< \dots < y^{\phi}$. The line segment of the efficient frontier which requires the minimum amount of uncertainty for DMU $\ih$ to be projected to its corresponding target point can be determined as follows.
\begin{subequations}\label{Eq:WhichSection}
\begin{align*}
  \text{If }  y^{\ih}+x^{\ih}\leq y^1+x^1 &\text{ the line segment is } EF_{1}.\\
  \text{If }  y^1+x^1\leq y^{\ih}+x^{\ih}\leq y^2 +x^2 &\text{ the line segment is } {EF_{1,2}.}\\
        \vdots  ~~~~~~~~~~~~~~~~~~~~~~~~& ~~~~~~~~~~~~~~~~~~~~~~~~\vdots \nonumber\\
  \text{If }  y^{\phi-1} +x^{\phi-1}\leq y^{\ih}+x^{\ih}\leq y^{\phi} +x^{\phi} &\text{ the line segment is } {EF_{\phi-1,\phi}}.\\
  \text{If }  y^{\phi} +x^{\phi}\leq y^{\ih}+x^{\ih}~~~~~~~~~~~~~ &\text{ the line segment is } {EF_\phi}.
\end{align*}
\end{subequations}
Here $EF_{i,j}$ is the straight line formed between $(x^{i^u},y^{i^u})$ and $(x^{j^u},y^{j^u})$, $EF_1$ is the vertical line $x=x^{1^u}$ and $EF_\phi$ is the horizontal line $y=y^{\phi^u}$.
\end{theorem}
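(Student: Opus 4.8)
The plan is to exploit a translation symmetry special to the $N=M=1$ box-uncertainty case, which reduces the whole question to a one-dimensional sweep along a line of constant input-plus-output. First I would record, using Definition \ref{Def:VirtualDMU}, that the virtual DMU is $\ih^u=(x^{\ih}-\un,\,y^{\ih}+\un)$ while each efficient extreme point $k$ transforms to $k^u=(x^{k}+\un,\,y^{k}-\un)$. The key observation is that every transformed extreme point is obtained from its nominal point by the \emph{same} translation $\un(1,-1)$, so the transformed efficient frontier is the rigid translate of the nominal frontier by $\un(1,-1)$. Consequently $\ih^u$ lies on the transformed frontier exactly when the point $P(\un):=(x^{\ih}-2\un,\,y^{\ih}+2\un)$ lies on the nominal frontier. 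Writing $s:=x^{\ih}+y^{\ih}$ and $s_k:=x^{k}+y^{k}$, note that $P(\un)$ travels along the fixed line $\{x+y=s\}$, moving up-and-to-the-left as $\un$ grows, with $P(0)$ the inefficient DMU itself.

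Second, I would argue that $\minU$ is the smallest $\un\ge 0$ at which $P(\un)$ first meets the nominal frontier. Since the PPS is convex and exhibits diminishing returns, the frontier is a concave piecewise-linear curve whose facet gradients $g_k\ge 0$ decrease in $k$, while $\{x+y=s\}$ has slope $-1$; as this slope is strictly below every facet slope, the line meets each facet line once and meets the frontier in a single point. Starting from the inefficient region at $\un=0$ and sweeping up-left, the first contact is therefore the minimiser. Equivalently, expressing the concave frontier as the pointwise minimum of its extended facet lines and noting that an inefficient $\ih$ lies strictly below all of them, the first line crossed is the facet active at the contact point, and $P$ stays weakly below every other facet line there; comparing with the DEA-distance values $\un_k$ from \eqref{Eq:ValueOfU2D} this yields $\minU=\min_{\NMu}\{\un:\mathfrak{D}(\ih^u,\NMu)=0\}=\un_{k^*}$ at the contact facet $k^*$, which is also consistent with the gradient-monotonicity of Proposition \ref{Prop:Gradient}.

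Third, I would identify the contact facet via the sum $x+y$. Along the segment $EF_{k,k+1}$ joining $(x^{k},y^{k})$ and $(x^{k+1},y^{k+1})$ the quantity $x+y$ increases monotonically from $s_k$ to $s_{k+1}$, since both coordinates increase along the segment. Hence the unique frontier point with $x+y=s$ lies on $EF_{k,k+1}$ iff $s_k\le s\le s_{k+1}$, which is exactly the stated sandwiching condition; for $s\le s_1$ the contact falls on the vertical part $EF_1$ (here $x=x^{1^u}$) and for $s\ge s_\phi$ on the horizontal part $EF_\phi$ (here $y=y^{\phi^u}$). Ordering the extreme points by $x$, equivalently by $y$, makes the thresholds $s_1<\dots<s_\phi$ increasing, matching the list in the statement.

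I expect the main obstacle to be the two endpoint cases $EF_1$ and $EF_\phi$: these rays are not chords between two extreme points, and in the input-oriented model they correspond to the limiting projection used in Example \ref{Ex:DEADist2D} (the $\epsilon\to 0$ device). I would handle them by the same constant-sum argument, noting that $\{x+y=s\}$ still meets each ray in a single point and that \eqref{Eq:ValueOfU2D} recovers the right $\un$ in the limits $g\to\infty$ (vertical) and $g\to 0$ (horizontal). A secondary point requiring care is the sign in \eqref{Eq:ValueOfU2D}: I must confirm that only the correct facet produces $\un\ge 0$, i.e. that $P(\un)$ genuinely moves toward, rather than away from, each candidate line, which follows from $\ih$ lying strictly below every facet line.
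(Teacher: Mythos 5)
Your proof is correct, but it takes a genuinely different route from the paper's. The paper argues locally: it invokes Theorem \ref{Thm:MaxIncUFix} to note that under the optimal realisation of uncertainty each data point travels along a line of slope $-1$, then uses Proposition \ref{Prop:Gradient} to cut the candidates down to the two facets adjacent to the DMU's nominal peers, compares the two values of \eqref{Eq:ValueOfU2D} on either side of the slope~$-1$ line through an extreme point (with equality exactly on that line), and concludes by ``repeated application'' to consecutive pairs of extreme points. You instead argue globally: all efficient extreme points translate rigidly by $\un(1,-1)$ while the virtual DMU moves by $-\un(1,-1)$, so efficiency at level $\un$ is equivalent to $P(\un)=(x^{\ih}-2\un,\,y^{\ih}+2\un)$ reaching the \emph{nominal} frontier; concavity makes the frontier the pointwise minimum of its facet lines, and strict monotonicity of $x+y$ along the frontier identifies the contact facet as the one satisfying the sandwich condition. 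Your version buys several things the paper leaves informal: the equivalence between $\minU$ and first contact is made explicit, Proposition \ref{Prop:Gradient} is not needed at all (you only note consistency), the paper's somewhat sketchy ``repeated application'' induction is replaced by a one-shot monotonicity argument, and the boundary cases $EF_1$ and $EF_\phi$ are handled by the same constant-sum computation instead of the $\epsilon\to 0$ limiting device the paper relegates to Example \ref{Ex:DEADist2D}. The paper's route, conversely, reuses only machinery already established (\eqref{Eq:ValueOfU2D} and Proposition \ref{Prop:Gradient}) and needs no appeal to concavity of the frontier. One small point of attribution in your write-up: the rigid-translation claim should be credited to Theorem \ref{Thm:MaxIncUFix} (which says this is the uncertainty realisation an optimal solution selects), since Definition \ref{Def:VirtualDMU} alone only names the transformed points; this is exactly how the paper justifies that $\Psi^u$ consists of the translated hyperplanes.
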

\begin{proof}
Let $\un_1<\un_2< \hdots$. From Theorem \ref{Thm:MaxIncUFix} an optimal solution to \eqref{Eq:RobustBoxDEA} for DMU $\ih$ exists such that the uncertain data for DMU $i$ are selected so that as $\un$ increases $(x_i,y_i)$ becomes $(x_i +\un_1, y_i - \un_1), (x_i +\un_2, y_i -\un_2)$ etc.. These points are on the line given by equation  $y=-x+y_i+x_i$. Hence, the introduction of box uncertainty results in the extreme points of the efficient frontier moving towards $(x_i+y_i,0)$ until $\gamma_{\ih}^*=1$. 

First, consider DMU $\ih$ and $\ih'$ where DMU $\ih$ lies to the left of the line $y=-x+y_i+x_i$, DMU $\ih'$ lies to the right and $y_{\phi-1}<y_{\ih}=y_{\ih'}<y_{\phi}$. In this way, $y^{\phi-1} +x^{\phi-1}< y^{\ih}+x^{\ih}< y^{\phi} +x^{\phi}$ and $y^{\phi} +x^{\phi}< y^{\ih'}+x^{\ih'}$ and both have peers DMU $\phi$ and $\phi-1$ in the nominal DEA problem \eqref{Eq:Nominal}. From Proposition \ref{Prop:Gradient} DMU $\ih^u(\ih'^u)$ will be projected to either $EF_\phi$ or $EF_{\phi -1,\phi}$. From \eqref{Eq:ValueOfU2D}, calculate the amount of uncertainty required for DMU $\ih^u$ and $\ih'^u$ to be projected to both $EF_\phi$ and $EF_{\phi -1,\phi}$. This gives $\un_{\ih}^{EF_{\phi -1,\phi}}\leq \un_{\ih}^{{EF_\phi}}$ and $\un_{\ih'}^{EF_{\phi -1,\phi}}\geq \un_{\ih'}^{{EF_\phi}}$, where $\un_{\ih}^{EF_{ij}}$ is the amount of uncertainty required for DMU $\ih$ to be efficient on $EF_{ij}$. If DMU $\ih$ lies on the line $y=-x+y_i+x_i$, then $\un_{\ih}^{EF_{\phi -1,\phi}}= \un_{\ih}^{{EF_\phi}}$. Repeated application to line segments defined by consecutive extreme points of the PPS of \eqref{Eq:Nominal} gives the result. 
\end{proof}
Inefficient DMUs with $y^{\ih}+x^{\ih}<y^1+x^1$ will be compared to the line $x=x^{1^u}$. If a DMU is on the line $x=x^{1^u}$ and has output less than $y^{1^u}$, it would be possible for the DMU to increase its output further without increasing the input. However, it is still deemed efficient as we are considering an input oriented DEA model. When DMU $\ih$ satisfies $ y^{\phi} +x^{\phi}<y^{\ih}+x^{\ih}$, DMU $\ih^u$ will be compared to the line $y=y^{\phi^u}$. Any DMUs on the line $y=y^{\phi^u}$ will require any infinitesimally small amount of uncertainty $\un$ to become efficient, because this will result in $y^{\ih^u}>y^{\phi^u}$.

From Theorem \ref{Thm:Sections}, for $N=M=1$ we can determine the line segment which requires the minimum amount of uncertainty for DMU $\ih^u$ to be projected to. Hence, we can determine the line segment of the efficient frontier on which DMU $\ih^u$ will be deemed efficient. We demonstrate this by deriving the minimum uncertainty for DMUs $E$ and $F$ from Example \ref{Ex:DEADist2D}. 

\addtocounter{example}{-1}
\begin{example}{\textbf{DEA Distance Continued.}}
From Theorem \ref{Thm:Sections}, the line segment of the efficient frontier the inefficient DMUs should be compared to can be calculated. We have $ y^{A}+x^{A}=2,~y^{B} +x^{B}=7,~ y^{C} +x^{C}=14,~y^{D} +x^{D}=18,~y^{E}+x^{E}=13$ and $y^{F}+x^{F}=8$. Therefore, both DMUs $E$ and $F$ should be compared to $BC$ and we only need to calculate $\un_E$ and $\un_F$ when DMUs $E$ and $F$ are compared to $ BC $.
\end{example}

From Theorem \ref{Thm:ValueOfU}, the amount of uncertainty required for DMU $\ih^u$ to be projected to each hyperplane, $\NMu \in \Psi^u$ can be calculated. However, as $N,M$ and $I$ increase in the uDEA problem \eqref{Eq:uDEA}, explicitly calculating each facet and the minimum amount of uncertainty for each DMU to each facet becomes computationally intensive. In Algorithm \ref{Alg:uDEA}, we propose a method to approximate the minimum amount of uncertainty, $\upsilon_{i}^*$, to deem DMU $i$ efficient using the robust DEA method \eqref{Eq:RobustBoxDEA}.

\begin{center}
\begin{algorithm}[H]
\DontPrintSemicolon
\SetAlgoLined
\SetKwInOut{Input}{Input}\SetKwInOut{Output}{Output}
\BlankLine
\Input{$\Omega,~ \max(\map(\U))$, step length $t>0$}
\Output{$\upsilon_{i}^*$ }
\BlankLine
\For{$\ih=1,\ldots I$}{
    Set $\sigma=0$\;
    \While{$\Eih<1$ and $\sigma<\max(\map(\U))$}{
    Transform the data to account for uncertainty \;
    $x^{\ih^u}=x^{\ih}-\sigma,~x^{i^u}=x^i+\sigma,~\forall i \in \mathcal{I},~i \neq \ih$ \;
    $y^{\ih^u}=y^{\ih}+\sigma,~y^{i^u}=y^i-\sigma,~\forall i \in \mathcal{I},~i \neq \ih$\;
    Calculate $\Eih$ \;
    Set $\sigma=\sigma+t$\;
    }
    \eIf{\Eih==1}{
    DMU $\ih$ is capable under $\Omega$ and $\minU=\sigma$.}
    {The $\ih$th DMU is incapable under $\Omega$.}
        }
\caption{General procedure to solve box uDEA \eqref{Eq:uDEA}}
\label{Alg:uDEA}
\end{algorithm} 
\end{center}

In Algorithm \ref{Alg:uDEA}, the step length $t$ is problem dependent. It should be chosen such that only minor gains in efficiency are observed between iterations. The accuracy of Algorithm \ref{Alg:uDEA} at solving \eqref{Eq:uDEA} is limited by the chosen step length $t$, i.e. if for some $t$ DMU $\ih$ becomes efficient, the true minimal amount of uncertainty is within $[\sigma, \sigma-t)$. Therefore, $t$ should be chosen to achieve the desired accuracy of $\minU$. In Section \ref{SubSec:CaseStudy}, we apply this algorithm to determine the minimum amount of uncertainty required for radiotherapy treatment plans to be deemed efficient.

\section{A case study in radiotherapy}\label{SubSec:CaseStudy}

Alongside surgery and chemotherapy, external beam radiotherapy is one of the major forms of cancer treatment and about two thirds of all cancer patients undergo a course of radiotherapy \citep{berkey2010managing}. Radiotherapy exploits a therapeutic advantage whereby cancer cells are unable to recover as well as healthy cells from radiation damage. Radiotherapy treatment is planned with the aim of achieving conflicting goals; while a sufficiently high dose of radiation is necessary for tumour control, a low dose of radiation is desirable to avoid complications in normal, healthy, tissue. Once an initial treatment plan has been made, planners will try and improve it to ensure prescribed doses are met and the Organ At Risk (OAR) tolerances are not exceeded. It is very difficult to design an optimal treatment plan due to the large number of parameters involved. A plan can be evaluated both through visual inspection and quantitative measures to check if it meets the required prescribed doses to the tumour and restrictions on doses to the OARs. For complex cases, it is unlikely to be an acceptable plan the first time round. As a result, the planner must change parameters and replan until a satisfactory plan is found. However, the trial and error aspect means the most desirable plan that could be achieved may not be found. This is particularly difficult in the optimisation processes involved in treatment planning due to the trade-offs between treatment of the tumour and sparing of the OARs. Treating the tumour can be achieved by using very high radiation doses. However, this will also affect healthy cells and subject the patient to high doses of radiation. Conversely, by not irradiating at all, the patient may die of cancer. Neither is desirable. Therefore, a suitable balance must be found between treatment and sparing.

In radiotherapy treatment planning, DMUs are the treatment plans and DEA assesses how well the plans perform in transforming inputs into outputs, i.e. delivering the prescribed dose to the tumour, the planning target volume (PTV), while limiting the dose delivered to OARs. The resulting efficiency score is relative to the set of plans considered in the study. The doses to the OARs, limits which are set by the oncologists, can be thought of as inputs in the DEA model. The dose to the PTV, which ultimately determines the medical outcome of the treatment, is the output in the DEA model. A good treatment plan would minimise the dose to the OARs and maximise the dose to the PTVs. 

Here, we consider 42 anonymised prostate IMRT treatments, 37 of which were used to treat patients at Auckland Radiation Oncology, a private radiation therapy centre in Auckland, New Zealand. The remaining five were plans produced as a result of replanning as described in \citet{lin2013quality}. These plans were also used in \citet{ehrgott2018uncertain} where ellipsoidal uncertainty is considered. After consulting with clinicians, \citet{lin2013quality} decided that the important variables to assess treatment plan quality were the generalised equivalent uniform dose, (gEUD), for the rectum and the $D_{95}$ for the prostate. The gEUD for the rectum is the only input in the DEA model. It is an averaging quantity that measures the homogeneity of the dose delivered to the rectum. The $D_{95}$ for the prostate is the dose (in Gy) received by 95\% of the prostate. They also considered the percentage volume of rectum overlapping the prostate as an environmental variable. This is because although the overlap cannot be changed it can influence the quality of the treatment plan and so should be accounted for. For more details on the use of environmental variables in DEA see \citet{cooper2000discretionary}. After replanning 5 of the plans that they believed could improve, they concluded that ``the results confirm that DEA is capable of identifying plan improvement potential and predicting the best attainable plan in terms of the inputs and outputs". This study assumed the planning data to be exact and classified treatment plans as efficient or inefficient based on these data. However, it is likely that the values for the data are uncertain. Hence, it is also possible that an inefficient plan does actually perform well in practice. This outlook is motivated by the inherently uncertain nature of radiotherapy treatment planning, see for example, \citet{buzdar2013accuracy}.

There are many uncertainties associated with the radiotherapy treatment process. It is well known in radiation oncology that the outcomes of radiotherapy differ from the plans, i.e. the doses delivered to structures are usually slightly different from those calculated during treatment planning. There are many sources of uncertainty in predicting the dose delivered, from the mathematical modelling of the dose distribution and the physical interaction of radiation with the biological tissues, to the variability in human contouring and treatment equipment alignment. In radiotherapy treatment planning the standard assumption is that uncertainty is proportional to the dose. The international commission on radiation units and measurements conclude that the available evidence for certain types of tumour suggests an accuracy of $\pm5\%$ is required \citep{andreo2004commissioning}. Combining the standard uncertainty value for dose determination and the uncertainty associated with the treatment planning system used, here Pinnacle \citep{Pinnacle}, \citet{Henriquez2008Uncertainty} suggest an uncertainty of $3.6\%$ is used. Therefore, we use $\max (\map (\U))=3.6\%$, i.e. if a plan is not efficient with 3.6\% or less uncertainty it is deemed incapable.

We wish to consider the effect uncertainty has on the conclusions drawn in \citet{lin2013quality}. We first transform the data to aid ease of computation. We divide the output $D_{95}$ by the prescribed dose $74 Gy\times 0.95$. Then our output is a measure of the proportion of the prescribed dose achieved by 95\% of the PTV volume. Here we note that although proportion variables generally cause problems in DEA models, this is not the case for our output variable as it is re-normalisation with the same denominator for each output, \citep{olesen2015efficiency}. Similarly, we divide the input gEUD to the rectum by 70 as clinically there is a higher risk of toxicity related to rectal doses over 70Gy \citep{tucker2012intermediate}. Multiplying both by 100 means we can model the uncertainty of up to $3.6\%$ uncertainty in both the input and output by using $\un=3.6$ directly. 

\citet{lin2013quality} include the percentage volume of the rectum overlapping the PTV to account for anatomical variations between patients. This is incorporated in the DEA model via a non-discretionary output variable \citep{cooper2000discretionary}. Here we will consider the DEA model both with and without the overlap included. Note that we do not consider uncertainty for the percentage volume of rectum overlapping the PTV. Instead, we consider that it has the same effect on treatment plan quality for each realisation of the data. 

From Theorem \ref{Thm:MaxIncUFix}, we can solve the robust DEA model \eqref{Eq:RobustBoxDEA} for uncertainty in increasing values of $\un$. We consider an increase in uncertainty up to the clinically relevant $\un=3.6\%$. Figure \ref{Fig:IMRTuDEA} shows the nominal efficiency score plotted against the minimum amount of uncertainty required to deem a DMU efficient, where the value of uncertainty has been increased in step sizes of $t=0.01$. We use $t=0.01$ as it is a sensible step size for the application and allows us to report the minimum amount of uncertainty correctly to two decimal places.

In Figure \ref{Fig:IMRTuDEA1}, the plans that are efficient when overlap is included are shown by a blue star symbol. These plans are all at $(1,0)$ in Figure \ref{Fig:IMRTuDEA2} where we include overlap as an environmental variable. 
The introduction of a non-discretionary output variable means different plans will define the efficient frontier depending on the amount of overlap being considered. Here, the plots are colour-coded to demonstrate the effect overlap has on our DEA results. 
\begin{figure}
\begin{subfigure}[t]{0.5\textwidth}
        \centering
      {\includegraphics[scale=0.4]{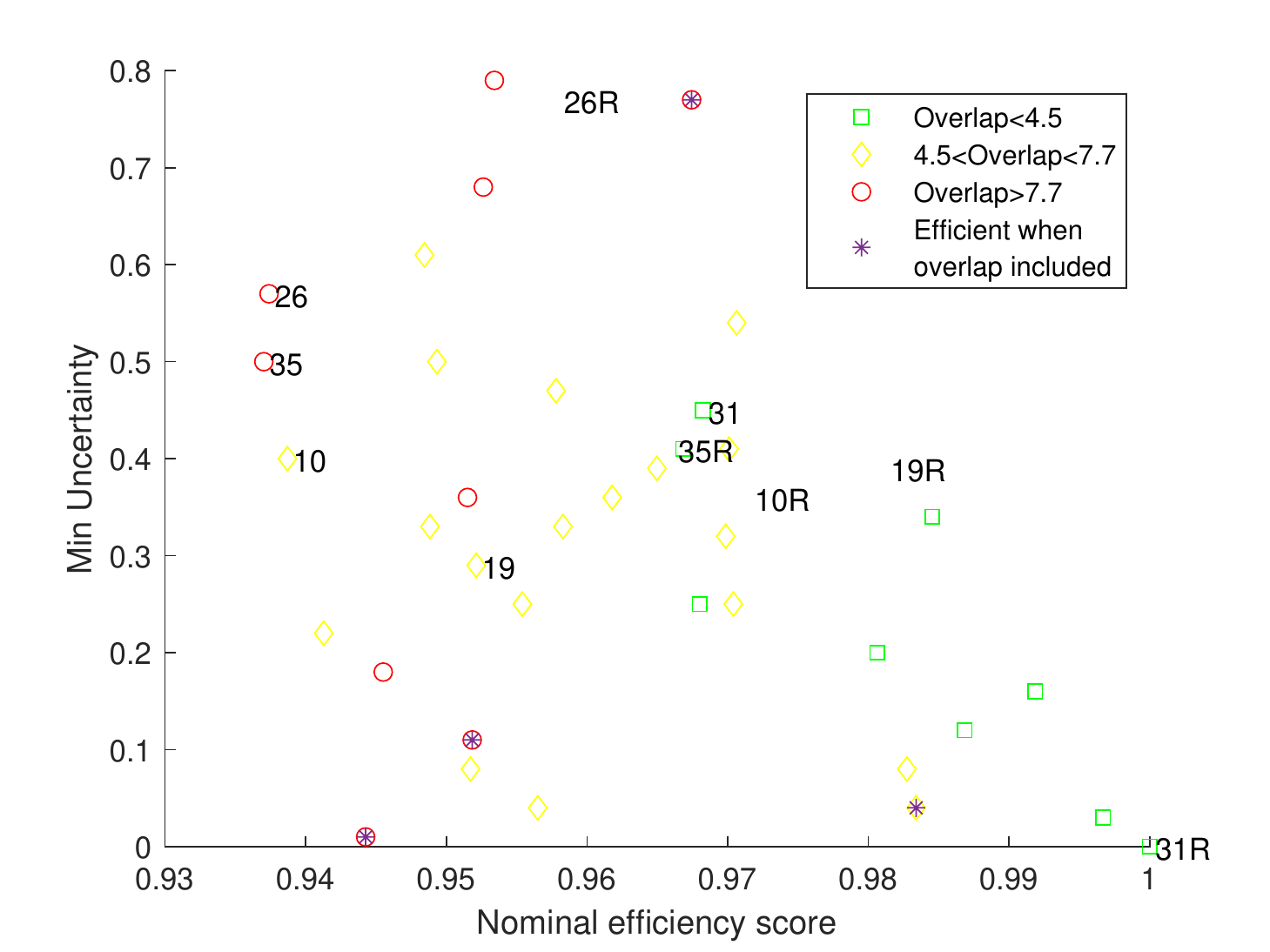}}
        \caption{Without overlap}
        \label{Fig:IMRTuDEA1}%
    \end{subfigure}%
    \begin{subfigure}[t]{0.5\textwidth}
        \centering
        {\includegraphics[scale=0.4]{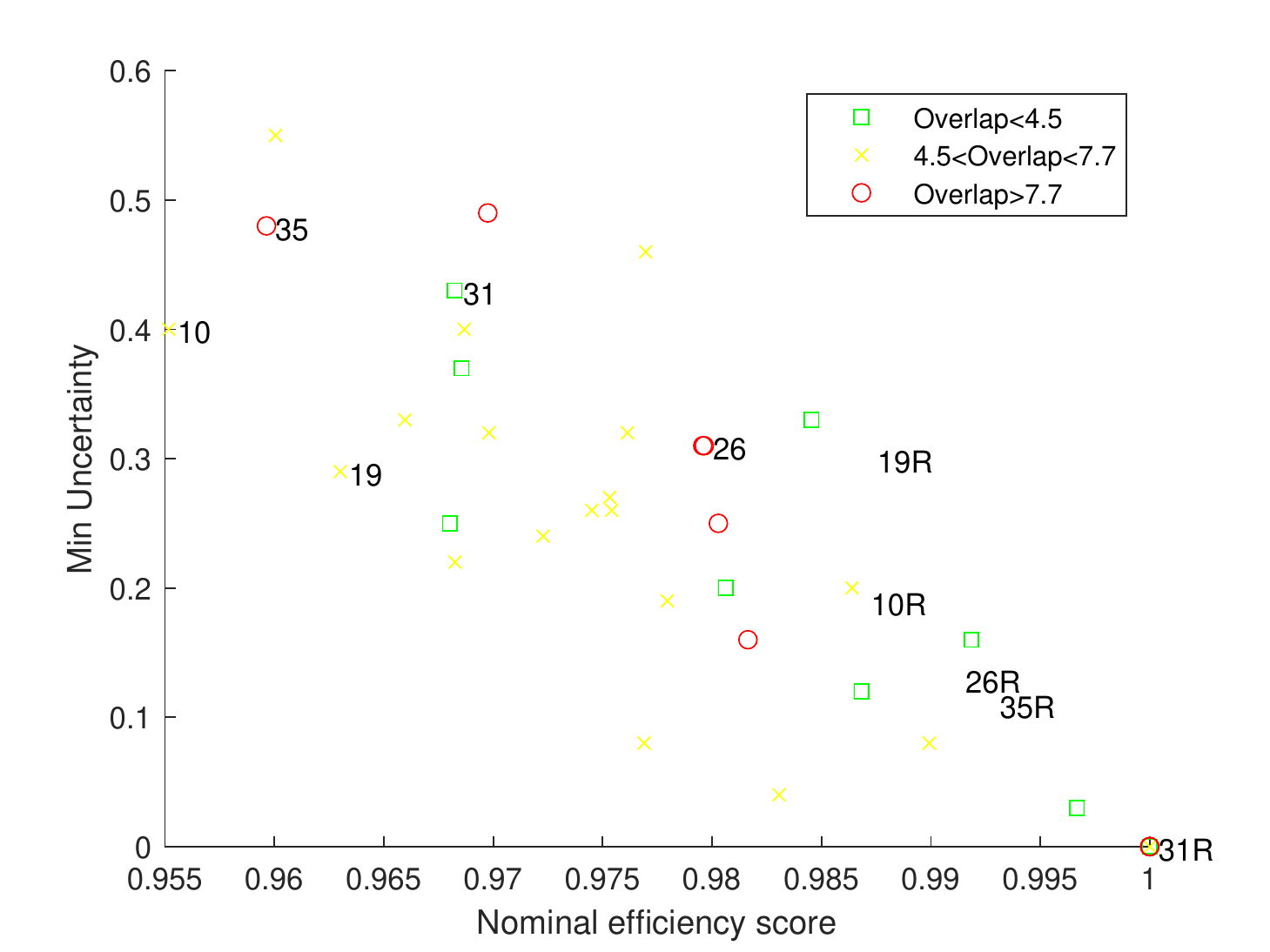}}
        \caption{With overlap}
        \label{Fig:IMRTuDEA2}%
    \end{subfigure}
    \caption{Results of the uDEA problem for IMRT data.}
    \label{Fig:IMRTuDEA}%
\end{figure}
Here, we note that for plan 19, although the efficiency score increased with replanning, the amount of uncertainty required for it to be evaluated efficient increased. This highlights the importance of considering the role uncertainty plays when choosing plans to be improved.

We argue that it may be more beneficial to replan plans that require a large amount of uncertainty to be deemed efficient, even if they have a better nominal efficiency score. This is because these are the plans that cannot argue their inefficiency is due to the uncertain data.

\section{Conclusion}

In this paper, we developed a method for solving the uDEA model \eqref{Eq:uDEA} for box uncertainty. If the efficient facets of the PPS are known the uDEA problem can be solved exactly. This can be done by considering the amount of uncertainty required for the uncertain inputs and outputs of a DMU to be projected to each efficient facet. But, as $M+N$ and $I$ increase this is not possible in polynomial time. Therefore, we have proposed an algorithm (Algorithm \ref{Alg:uDEA}) that iteratively increases the amount of uncertainty $\un$ allowed, solving a robust DEA model by linear programming in each iteration. Clearly, methods that do not rely on the full specification of all efficient facets would be beneficial to progress in this field. This algorithm computes the amount of uncertainty defined by \eqref{Eq:uDEA} up to a deviation equal to the iteration step length.

This paper has built on work by \citet{ehrgott2018uncertain} where a first order algorithm for solving the uDEA problem with ellipsoidal uncertainty was provided. Because uDEA problems are non-convex, further research into heuristic approaches may be beneficial. The uDEA model involves nonlinear terms so an alternative approach would be to determine a suitable simplified model. A model that captures the properties of the uDEA model by introducing binary variables to linearise the nonlinear constraints. This is an area we will explore further.
\subsection*{Acknowledgement}
We gratefully acknowledge the support of the EPSRC funded EP/L015692/1 STOR-i Centre for Doctoral Training.
\bibliographystyle{apalike}  


\bibliography{bib}

\begin{thebibliography}{}

\bibitem[Andreo et~al., 2004]{andreo2004commissioning}
Andreo, P., Cramb, J., Fraass, B., Ionescu-Farca, F., Izewska, J., Levin, V.,
  Mijnheer, B., Rosenwald, J., Scalliet, P., Shortt, K., et~al. (2004).
\newblock Commissioning and quality assurance of computerized planning systems
  for radiation treatment of cancer.
\newblock Technical Report 430, International Atomic Energy Agency.

\bibitem[Berkey, 2010]{berkey2010managing}
Berkey, F.~J. (2010).
\newblock Managing the adverse effects of radiation therapy.
\newblock {\em American Family pPysician}, 82(4):381--388.

\bibitem[Briec and Leleu, 2003]{briec2003dual}
Briec, W. and Leleu, H. (2003).
\newblock Dual representations of non-parametric technologies and measurement
  of technical efficiency.
\newblock {\em Journal of Productivity Analysis}, 20(1):71--96.

\bibitem[Buzdar et~al., 2013]{buzdar2013accuracy}
Buzdar, S.~A., Afzal, M., Nazir, A., and Gadhi, M.~A. (2013).
\newblock Accuracy requirements in radiotherapy treatment planning.
\newblock {\em Journal of the College of Physicians and Surgeons Pakistan},
  23(6):418--23.

\bibitem[Charnes and Cooper, 1984]{charnes1984preface}
Charnes, A. and Cooper, W.~W. (1984).
\newblock Preface to topics in data envelopment analysis.
\newblock {\em Annals of Operations Research}, 2(1):59--94.

\bibitem[Cooper et~al., 2000]{cooper2000discretionary}
Cooper, W., Seiford, L., and Tone, K. (2000).
\newblock Discretionary, non-discretionary and categorical variables.
\newblock {\em Data Envelopment Analysis: A Comprehensive Text with Models,
  Applications, References and DEA-Solver Software}, pages 183--219.

\bibitem[Ehrgott et~al., 2019]{ehrgott2019multiobjective}
Ehrgott, M., Hasannasab, M., and Raith, A. (2019).
\newblock A multiobjective optimization approach to compute the efficient
  frontier in data envelopment analysis.
\newblock {\em Journal of Multi-Criteria Decision Analysis}, 26(3-4):187--198.

\bibitem[Ehrgott et~al., 2018]{ehrgott2018uncertain}
Ehrgott, M., Holder, A., and Nohadani, O. (2018).
\newblock Uncertain data envelopment analysis.
\newblock {\em European Journal of Operational Research}, 268(1):231--242.

\bibitem[Frei and Harker, 1999]{frei1999projections}
Frei, F. and Harker, P. (1999).
\newblock Projections onto efficient frontiers: Theoretical and computational
  extensions to {DEA}.
\newblock {\em Journal of Productivity Analysis}, 11(3):275--300.

\bibitem[Gorissen et~al., 2015]{gorissen2015practical}
Gorissen, B.~L., Yan{\i}ko{\u{g}}lu, {\.I}., and den Hertog, D. (2015).
\newblock A practical guide to robust optimization.
\newblock {\em Omega}, 53:124--137.

\bibitem[Henr{\'\i}quez and Castrill{\'o}n, 2008]{Henriquez2008Uncertainty}
Henr{\'\i}quez, F. and Castrill{\'o}n, S. (2008).
\newblock The effect of the different uncertainty models in dose expected
  volume histogram computation.
\newblock {\em Australasian Physics {\&} Engineering Sciences in Medicine},
  31(3):196--202.

\bibitem[Lin et~al., 2013]{lin2013quality}
Lin, K.-M., Simpson, J., Sasso, G., Raith, A., and Ehrgott, M. (2013).
\newblock Quality assessment for {VMAT} prostate radiotherapy planning based on
  {DEA}.
\newblock {\em Physics in Medicine {\&} Biology}, 58(16):5753.

\bibitem[Olesen and Petersen, 2015]{olesen2015facet}
Olesen, O. and Petersen, N. (2015).
\newblock Facet analysis in data envelopment analysis.
\newblock In Zhu, J., editor, {\em Data Envelopment Analysis -- A Handbook of
  Models and Methods}, pages 145--190. Springer.

\bibitem[Olesen et~al., 2015]{olesen2015efficiency}
Olesen, O.~B., Petersen, N.~C., and Podinovski, V.~V. (2015).
\newblock Efficiency analysis with ratio measures.
\newblock {\em European Journal of Operational Research}, 245(2):446--462.

\bibitem[Philips, 2009]{Pinnacle}
Philips (2009).
\newblock Pinnacle treatment planning.
\newblock
  \url{https://www.philips.co.uk/healthcare/solutions/radiation-oncology/radiation-treatment-planning}.

\bibitem[Thanassoulis and Allen, 1998]{thanassoulis1998simulating}
Thanassoulis, E. and Allen, R. (1998).
\newblock Simulating weights restrictions in data envelopment analysis by means
  of unobserved {DMU}s.
\newblock {\em Management Science}, 44(4):586--594.

\bibitem[Tucker et~al., 2012]{tucker2012intermediate}
Tucker, S., Dong, L., Michalski, J., Bosch, W., Winter, K., Cox, J., Purdy, J.,
  and Mohan, R. (2012).
\newblock Do intermediate radiation doses contribute to late rectal toxicity?
  {A}n analysis of data from radiation therapy oncology group protocol 94-06.
\newblock {\em International Journal of Radiation Oncology* Biology* Physics},
  84(2):390--395.

\end{thebibliography}

\end{document}